\definecolor{verylight}{gray}{0.97}
\definecolor{light}{gray}{0.9}
\definecolor{medium}{gray}{0.85}
\definecolor{dark}{gray}{0.6}
\def\NZQ{\mathbb}               
\def\KK{{\NZQ K}}
\def\frk{\mathfrak}               
\def\mm{{\frk m}}
\def\KK{{\NZQ K}}
\renewcommand{\qedsymbol}{$\square$} 
\def\G{{\mathcal G}}
\def\eb{{\mathbf e}}
\def\ww{{\mathbf w}}
\def\0b{{\mathbf 0}}
\def\reg{{\mathbf reg}}
\def\height{\operatorname{ht}}
\def\depth{\operatorname{depth}}
\def\opn#1#2{\def#1{\operatorname{#2}}} 
\opn\chara{char} \opn\length{\ell} \opn\pd{pd} \opn\rk{rk}
\opn\projdim{proj\,dim} \opn\injdim{inj\,dim} \opn\rank{rank}
\opn\depth{depth} \opn\grade{grade} \opn\height{height}
\opn\embdim{emb\,dim} \opn\codim{codim}
\opn\Tr{Tr} \opn\bigrank{big\,rank}
\opn\superheight{superheight}\opn\lcm{lcm}
\opn\trdeg{tr\,deg}
	\opn\reg{reg} \opn\lreg{lreg} \opn\ini{in} \opn\lpd{lpd}
	\opn\size{size} \opn\sdepth{sdepth}
	\opn\link{link}\opn\fdepth{fdepth}\opn\lex{lex}
	\opn\tr{tr}
	\opn\type{type}
	\opn\gap{gap}
	\opn\arithdeg{arith-deg}
	\opn\HS{HS}
	\opn\GL{GL}
	\opn\div{div} \opn\Div{Div} \opn\cl{cl} \opn\Cl{Cl}
	\opn\Spec{Spec} \opn\Supp{Supp} \opn\supp{supp} \opn\Sing{Sing}
	\opn\Ass{Ass} \opn\Min{Min}\opn\Mon{Mon}
	\opn\Ann{Ann} \opn\Rad{Rad} \opn\Soc{Soc}\opn\Deg{Deg}
	\opn\Im{Im} \opn\Ker{Ker} \opn\Coker{Coker} \opn\Am{Am}
	\opn\Hom{Hom} \opn\Tor{Tor} \opn\Ext{Ext} \opn\End{End}
	\opn\Aut{Aut} \opn\id{id}
	\opn\nat{nat}
	\opn\pff{pf}
	\opn\Pf{Pf} \opn\GL{GL} \opn\SL{SL} \opn\mod{mod} \opn\ord{ord}
	\opn\Gin{Gin} \opn\Hilb{Hilb}\opn\sort{sort}
	\opn\PF{PF}\opn\Ap{Ap}
	\opn\mult{mult}
	\opn\bight{bight}
	\opn\aff{aff}
	\opn\relint{relint} \opn\st{st}
	\opn\lk{lk} \opn\cn{cn} \opn\core{core} \opn\vol{vol}  \opn\inp{inp} \opn\nilpot{nilpot}
	\opn\link{link} \opn\star{star}\opn\lex{lex}\opn\set{set}
    \opn\last{last}
	\opn\width{wd}
	\opn\Fr{F}
	\opn\QF{QF}
	\opn\G{G}
	\opn\type{type}\opn\res{res}
	\opn\conv{conv}
	\opn\Ind{Ind}
	\opn\gr{gr}
	\def\pot#1#2{#1[\kern-0.28ex[#2]\kern-0.28ex]}
	\opn\dirlim{\underrightarrow{\lim}}
	\opn\inivlim{\underleftarrow{\lim}}
	\let\to=\rightarrow
	\def\Implies{\ifmmode\Longrightarrow \else
		\unskip${}\Longrightarrow{}$\ignorespaces\fi}
	\def\implies{\ifmmode\Rightarrow \else
		\unskip${}\Rightarrow{}$\ignorespaces\fi}
	\def\iff{\ifmmode\Longleftrightarrow \else
		\unskip${}\Longleftrightarrow{}$\ignorespaces\fi}
	\newtheorem{Theorem}{Theorem}[section]
	\newtheorem{Lemma}[Theorem]{Lemma}
	\newtheorem{Corollary}[Theorem]{Corollary}
	\newtheorem{Remark}[Theorem]{Remark}
	\newtheorem{Definition}[Theorem]{Definition}
	\let\epsilon\varepsilon
	\let\kappa=\varkappa
	\def\qed{\ifhmode\textqed\fi
		\ifmmode\ifinner\quad\qedsymbol\else\dispqed\fi\fi}
	\def\textqed{\unskip\nobreak\penalty50
		\hskip2em\hbox{}\nobreak\hfil\qedsymbol
		\parfillskip=0pt \finalhyphendemerits=0}
	\def\dispqed{\rlap{\qquad\qedsymbol}}
	\opn\dis{dis}
	\def\pnt{{\raise0.5mm\hbox{\large\bf.}}}
	\opn\Lex{Lex}
\begin{document}
		\title {Depth of powers of integrally closed edge ideals of  edge-weighted paths}
		
	\author {Jiaxin Li }
\address{ School of Mathematical Sciences, Soochow University, Suzhou 215006, P. R. China}
\email{lijiaxinworking@163.com}

\author{Thanh Vu}
\address{Institute of Mathematics, VAST, 18 Hoang Quoc Viet, Hanoi, Vietnam}
\email{vuqthanh@gmail.com}
		
	\author{Guangjun Zhu*}
\address{ School of Mathematical Sciences, Soochow University, Suzhou 215006, P. R. China}
\email{zhuguangjun@suda.edu.cn}
\thanks{ * Corresponding author.}
	
		\thanks{2020 {\em Mathematics Subject Classification}.
			Primary 13B22, 13F20; Secondary 05C99, 05E40}

		\thanks{Keywords:  Depth, edge-weighted graph, edge ideal, integrally closed path}

		
		

		\begin{abstract} We compute the depth of powers of edge ideals of integrally closed edge-weighted paths.
		\end{abstract}
		\setcounter{tocdepth}{1}		
		\maketitle
		\section{Introduction}
Let $G$ be a finite, simple graph with the vertex set $V(G)$ and the edge set $E(G)$. The \emph{edge ideal} of $G$ is the quadratic monomial ideal in $S = \KK[V(G)]$ generated by all monomials $xy$ for each edge $\{x, y\} \in E(G)$. According to a celebrated theorem by Trung  \cite{T},  if $G$ is a connected simple graph, then
\[
\lim_{t\to \infty} \depth(S/I(G)^t) =
\begin{cases}
    1 & \text{if } G \text{ is bipartite},\\
    0 & \text{otherwise}.
\end{cases}
\]
In this work, we show that this phenomenon does not generally hold for edge ideals of edge-weighted paths. In general, it is known from a result of Brodmann \cite{B} that $\depth(S/I^t)$ is eventually constant for any homogeneous ideal $I$ in a polynomial ring $S$. Nonetheless, computing the depth of the powers of a given ideal $I$ remains a challenging problem, and the behavior of the depth function of powers of homogeneous ideals can be wild \cite{HNTT}. Even when $I$ is the edge ideal of a simple graph, exact values are known only for a few specific classes, such as cycles, starlike trees \cite{MTV}, and Cohen-Macaulay trees \cite{HHV}.

Suppose ${\bf w}: E(G) \rightarrow \mathbb{Z}_{>0}$ is a weight function on the edges of $G$, and we  write $G_{\bf w}$ for the pair $(G, {\bf w})$. We refer to  $G_{\bf w}$
 as an \emph{edge-weighted graph} with underlying graph $G$. The \emph{edge ideal} of  $G_{\bf w}$, introduced in \cite{PS}, is defined as
\[
I(G_{\bf w}) = \left( x^{{\bf w}(xy)} y^{{\bf w}(xy)} \mid \{x, y\} \in E(G) \right).
\]

In \cite{LTZ}, Li, Trung, and Zhu characterized the Cohen-Macaulay property for all powers of edge ideals of $G_{\bf w}$ when $G$ is a tree with a perfect matching of pendant edges. In \cite{ZLCY}, Zhu, Li, Cui, and Yang derived exact formulas for the depth of powers of edge ideals for integrally closed weighted cycles. In \cite{ZDCL}, Zhu, Duan, Cui, and Li established exact formulas for the depth of powers of edge ideals of weighted star graphs and obtained lower bounds for the depth of integrally closed edge-weighted paths. In this work, we prove that these lower bounds from \cite{ZDCL} are sharp.

For simplicity, let $\ww = (w_1, \ldots, w_{n-1})$ be a list of $n-1$ positive integers. We denote by $I(P(\ww))$ the ideal
\[
I(P(\ww)) = \left( (x_1x_2)^{w_1}, \ldots, (x_{n-1}x_n)^{w_{n-1}} \right).
\]

By \cite{DZCL,VZ}, $I(P(\ww))$ is integrally closed if and only if $\ww$ has at most two entries greater than one, referred to as non-trivial weights. Furthermore, if $\ww$ has exactly two non-trivial weights, they must occur in positions $a$ and $a+2$ for some $a$. In this article, we compute the depth of powers of the ideal $I(P(\ww))$ when it is integrally closed. Our main results are as follows:

\begin{Theorem}\label{thm_1}
Let $\ww = (1,\ldots,1,\omega,1,\ldots,1)$, where $\omega > 1$ appears in the $a$-th position,  for   some $1\le a\le \lfloor\frac{n}{2}\rfloor$. Then, for any $t\ge 2$,
\[
\depth(S/I(P(\ww))^t) =
\begin{cases}
    \max \left\{ \left\lceil \frac{n - t + 1}{3} \right\rceil ,\, 1 \right\}, & \text{if } a=1, \text{ or } t = 2, a\equiv 1\!\!\!\!\pmod 3 \text{\ and\ } n\equiv 2\!\!\!\! \pmod 4, \\[0.5em]
    \max \left\{ \left\lceil \frac{n - t}{3} \right\rceil ,\, 1 \right\}, & \text{otherwise}.
\end{cases}
\]
\end{Theorem}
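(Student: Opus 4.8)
Since \cite{ZDCL} already supplies the lower bound $\depth(S/I(P(\ww))^{t})\ge(\text{the stated value})$, the entire task is the matching upper bound, and I would obtain it from the standard inequality $\depth(S/I^{t})\le\dim(S/\pp)$, valid for every $\pp\in\Ass(S/I^{t})$. Because the lower bound already forces $\dim(S/\pp)\ge(\text{value})$ for \emph{all} associated primes, it suffices to exhibit a single $\pp\in\Ass(S/I^{t})$ with $\dim(S/\pp)$ equal to the target value; this one prime then certifies simultaneously that the depth is at most the target and that the \cite{ZDCL} bound is sharp. Write $S=\KK[x_{1},\dots,x_{n}]$, $I=I(P(\ww))$, and $g_{i}=x_{i}x_{i+1}$ for $i\neq a$, $g_{a}=(x_{a}x_{a+1})^{\omega}$, so that $I=(g_{1},\dots,g_{n-1})$.

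For a monomial ideal one has $\pp_{F}:=(x_{i}:i\in F)\in\Ass(S/I^{t})$ precisely when there is a monomial $u$ with $I^{t}:u=\pp_{F}$, and then $\dim(S/\pp_{F})=n-|F|$. Thus the heart of the argument is the explicit construction, for the correct set $F$ of size $n-(\text{value})$, of a witness monomial $u$ together with a verification that $I^{t}:u=\pp_{F}$. Since the target value is of order $(n-t)/3$, the required prime has large height $|F|\approx\tfrac{2n+t}{3}$, so it is an \emph{embedded} associated prime of $I^{t}$ rather than a minimal one; the minimal primes of $I^{t}$ (the minimal vertex covers of the path) have too large a dimension and are irrelevant here. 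Concretely I would take $u$ to be a product of the generators $g_{i}$ along a spread-out, induced-matching-type pattern of edges, with its exponents tuned so that dividing by $u$ collapses $I^{t}$ exactly onto the variables indexed by $F$; the uncovered vertices $[n]\setminus F$ then form a maximal, suitably spread-out independent set whose size is exactly the target value.

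The position $a$ of the heavy edge enters only through the local shape of $u$ near $x_{a},x_{a+1}$: because $g_{a}=(x_{a}x_{a+1})^{\omega}$ carries weight $\omega>1$, a single factor of $g_{a}$ consumes $\omega$ units of degree, which changes the packing of the $g_{i}$'s in $u$ and hence the attainable size of $F$. When $a=1$ the heavy edge sits at the pendant position, where it can be absorbed at no cost, and the optimal pattern coincides with that of the plain path, whose depth of powers is computed in \cite{MTV} (a path is a starlike tree); this yields the value $\lceil(n-t+1)/3\rceil$. When $a\ge2$ the interior heavy edge obstructs one step of the packing, forcing $|F|$ to be one larger and shaving the numerator to $\lceil(n-t)/3\rceil$. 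Throughout, I would cross-check the computation of $I^{t}:u$ and of the reduced configurations by the short exact sequence
\[
0\longrightarrow S/(I^{t}:x_{n})\xrightarrow{\ \cdot x_{n}\ }S/I^{t}\longrightarrow S/(I^{t},x_{n})\longrightarrow 0 ,
\]
in which $S/(I^{t},x_{n})$ is the corresponding quotient for the weighted path on $x_{1},\dots,x_{n-1}$ (handled by induction on $n$) and $I^{t}:x_{n}=J^{t}+\sum_{j=1}^{t}x_{n-1}^{\,j}x_{n}^{\,j-1}J^{\,t-j}$ with $J=(g_{1},\dots,g_{n-2})$ brings in lower powers $t'<t$ (handled by induction on $t$).

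I expect the genuine difficulties to be twofold. The first is verifying that the constructed colon is \emph{exactly} the prime $\pp_{F}$ — that no stray generator slips into $I^{t}:u$ and that $u\notin I^{t}$ — since the high powers forced by $\omega>1$ make these membership tests delicate and very sensitive to the exact exponents chosen. The second, and sharper, difficulty is the arithmetic of the ceiling functions: whether $\lceil(n-t+1)/3\rceil$ or $\lceil(n-t)/3\rceil$ is attained depends on residues modulo $3$, and the exceptional line $t=2,\ a\equiv1\pmod 3,\ n\equiv2\pmod 4$ is precisely a boundary configuration in which the packing admits one extra spread vertex, so the optimal $|F|$ drops by one and the value reverts to the $\lceil(n-t+1)/3\rceil$ branch. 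Isolating this single exceptional family — and proving that no other congruence class behaves the same way — will, I expect, require treating $t=2$ by a direct computation of the second power rather than by the generic construction.
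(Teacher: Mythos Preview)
Your overall architecture matches the paper: the lower bound is indeed imported from \cite{ZDCL}, and the upper bound is obtained by passing to a colon ideal and invoking the inequality $\depth(S/I^{t})\le\depth(S/(I^{t}:f))$ of Lemma~\ref{lem_upperbound}. But the specific mechanism you propose for the upper bound is different from the paper's, and as stated it has a real gap.

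You want to exhibit a \emph{prime} colon, i.e.\ a monomial $u$ with $I^{t}:u=\pp_{F}$, so that $\dim(S/\pp_{F})$ equals the target. This is strictly stronger than what is needed, and it is not clear it can always be done: for a monomial ideal $J$ one has $\depth(S/J)\le\dim(S/\pp)$ for every $\pp\in\Ass(S/J)$, but there is no reason in general for some associated prime to realise equality, so the existence of a $\pp_{F}$ of the exact required dimension is an extra fact that must be proved, not assumed. Your description of $u$ (``a product of the $g_{i}$ along a spread-out, induced-matching-type pattern'') is far too vague to check this, and the paper's own witnesses do \emph{not} produce prime colons. Concretely, the paper uses three explicit families
\[
f_{a,t}=x_{a+2}\,u_{a}^{\omega}\!\!\prod_{i=a+2}^{a+t-1}\!u_{i},\qquad
g_{a,t}=x_{a-1}\,u_{a}^{\omega}\!\!\prod_{i=a-t+1}^{a-2}\!u_{i},\qquad
h_{c,d}=u_{a}^{\omega}x_{a-1}x_{a+2}\!\!\prod_{i=a-c-1}^{a-2}\!u_{i}\!\!\prod_{i=a+2}^{a+d+1}\!u_{i},
\]
and proves (Lemmas~\ref{lem_w_1_1}--\ref{lem_w_1_3}) that $I^{t}:f_{a,t}$ and $I^{t}:g_{a,t}$ equal $I$ plus a block of variables, while $I^{t}:h_{c,d}$ equals $I$ plus variables plus the edge ideal $I(K_{U,V})$ of a complete bipartite graph. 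None of these is prime; their depths are then bounded not via associated primes but via Lemma~\ref{sum1}, Lemma~\ref{depth_path}, and the bipartite-completion bound of Lemma~\ref{lem_depth_bipartite_completion}. In the $h_{c,d}$ range (which covers all $t\ge 5$ when $a\ge 3$) the bipartite piece $K_{U,V}$ has both parts of size $\ge 2$, so even the colon itself has no associated prime of dimension equal to its depth; one really needs the bipartite-completion argument, not a dimension count.

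Two smaller points. First, your short exact sequence and the formula for $I^{t}:x_{n}$ are not what drives the argument; the paper instead uses Lemma~\ref{lem_colon_leaf} ($I^{t}:(x_{n-1}x_{n})=I^{t-1}$) together with Lemma~\ref{lem_colon_incl} to run the induction on $n$ inside the colon computations. Second, the exceptional case $t=2$, $a\equiv 1\pmod 3$, $n\equiv 2\pmod 3$ is handled in the paper simply by taking the minimum of the $f_{a,2}$ and $g_{a,2}$ bounds; no separate ``packing'' analysis is needed.
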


\begin{Theorem}\label{thm_2}
Let $\ww = (1,\ldots,1,\gamma,1,\delta,1,\ldots,1)$, where $\gamma>1$ and $ \delta > 1$ appear at positions $a$ and $a+2$, respectively, for some $1 \le a \le \lfloor \frac{n}{2} \rfloor - 1$. Then, for any $t\ge 2$,
\[
\depth(S/I(P(\ww))^t) =
\begin{cases}
    \max \left\{ \left\lceil \frac{n - t+1}{3} \right\rceil ,\, 2 \right\}, & \text{if } a = 1, \\[0.5em]
    \max \left\{ \left\lceil \frac{n - t}{3} \right\rceil ,\, 2 \right\}, & \text{otherwise}.
\end{cases}
\]
\end{Theorem}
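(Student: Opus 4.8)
Since the lower bounds for $\depth(S/I(P(\ww))^t)$ are already supplied by \cite{ZDCL}, the whole content of Theorem~\ref{thm_2} lies in the matching upper bounds. My plan is to induct on $n$, feeding the single-weight formula of Theorem~\ref{thm_1} into the induction, and to treat the two non-trivial edges $\{x_a,x_{a+1}\}$ and $\{x_{a+2},x_{a+3}\}$ as a rigid block whose presence ultimately pins the eventual depth at the value $2$.

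The engine of the induction is the short exact sequence
\[
0 \longrightarrow S/(I^t : g) \xrightarrow{\,\cdot g\,} S/I^t \longrightarrow S/(I^t, g) \longrightarrow 0,
\]
for a carefully chosen monomial $g$. Peeling from the long unweighted tail with $g=x_n$ keeps the block intact (this is where the hypothesis $a\le\lfloor n/2\rfloor-1$ is used, guaranteeing a tail of positive length), and the two required identities are: (i) $(I^t:x_n)$ reduces, possibly after a shift, to the $t$-th power of the edge ideal of the path on $x_1,\dots,x_{n-1}$ carrying the same weighted block, and (ii) $(I^t,x_n)$ is the edge-ideal power of the path with $x_n$ deleted, together with a residual term supported away from the block. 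Alternatively, cutting at the unweighted middle edge $\{x_{a+1},x_{a+2}\}$ separates the path into two arms each carrying at most one non-trivial weight, so that \emph{both} resulting quotients fall directly under Theorem~\ref{thm_1}.

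Running the depth lemma on this sequence and substituting the inductive values (or the Theorem~\ref{thm_1} values) yields $\depth(S/I^t)\le\max\{\lceil(n-t)/3\rceil,\,2\}$, with the shift to $n-t+1$ in the case $a=1$ coming from the block abutting the endpoint $x_1$, so that one fewer vertex is consumed at the left. The appearance of the constant $2$ --- rather than the value $1$ from Trung's theorem --- is then explained, and independently verified, by exhibiting an embedded prime: a maximally spread vertex set along the two unweighted arms, augmented by the two block-vertices forced into every vertex cover detected by the $t$-th power, gives $\mathfrak{p}\in\Ass(S/I^t)$ with $\dim(S/\mathfrak{p})$ equal to the claimed depth; in the stable range this prime leaves exactly two vertices free, which is the source of the floor at $2$.

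The principal obstacle is the colon computation (i): because $I^t$ is a power of a \emph{non-squarefree} ideal, $(I^t:x_n)$ does not collapse by any formal rule, and one must follow how the large exponents $\gamma,\delta$ are distributed among the $t$ factors to confirm that the colon is again a weighted-path power of the predicted shape. A secondary difficulty is the base case $t=2$, where the residue conditions of Theorem~\ref{thm_1} enter through the reduction and must be checked by a finite direct computation, and the verification that the embedded prime persists for \emph{every} $t\ge 2$ rather than only for the first power --- this persistence, together with the endpoint bookkeeping distinguishing $a=1$ from $a\ge 2$, is precisely what the two cases of the statement record.
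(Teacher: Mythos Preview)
Your proposal has a genuine directional gap. The short exact sequence
\[
0 \to S/(I^t:g) \to S/I^t \to S/(I^t,g) \to 0
\]
yields the inequality $\depth(S/I^t)\ge \min\{\depth(S/(I^t:g)),\depth(S/(I^t,g))\}$, which is a \emph{lower} bound; it does not by itself give the upper bound $\depth(S/I^t)\le\max\{\lceil(n-t)/3\rceil,2\}$ that the theorem requires. The only upper bound you can extract from this sequence is $\depth(S/I^t)\le\depth(S/(I^t:g))$ (equivalently Lemma~\ref{lem_upperbound}), so the whole weight falls on computing $\depth(S/(I^t:x_n))$ --- and here your claim (i) fails: for a leaf $x_n$ with trivial weight one has $I^t:x_n = I(H_\ww)^t + x_{n-1}\,I(G_\ww)^{t-1}$ (with $H$ the path on $x_1,\dots,x_{n-1}$), which is not a power of a shorter weighted-path ideal and has no obvious depth formula. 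The alternative cut at $u_{a+1}$ suffers the same problem: neither $I^t:u_{a+1}$ nor $(I^t,u_{a+1})$ splits into a sum of two single-weight path powers, so Theorem~\ref{thm_1} cannot be invoked on the pieces.

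Your final paragraph, exhibiting an associated prime $\mathfrak p=I^t:f$ of the correct height, is the right idea and is in fact exactly the route the paper takes --- but it is not an inductive peeling argument at all. The paper writes down, for each regime of $(a,t)$, an explicit monomial $f$ (built from $u_a^\gamma$ or $x_a^{\gamma-1}x_{a+1}^\gamma$ together with a product of trivially-weighted edges along the arms) and computes $I^t:f$ directly as a sum of $I$, some variables, and a complete-bipartite edge ideal; the depth of that colon is then bounded via Lemmas~\ref{sum1}, \ref{depth_path}, and \ref{lem_depth_bipartite_completion}. Your description of ``a maximally spread vertex set along the two unweighted arms, augmented by the two block-vertices'' is too coarse to replace this: you would still need to name the monomial $f$ and verify the colon computation, which is precisely the non-trivial content of Lemmas~\ref{lem_w_2_1}--\ref{t=2}.
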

In particular, the limit depth is $2$ when $\ww$ has two non-trivial weights.  Section~\ref{sec:prelim} provides the basic facts and notation used throughout the paper. Section~\ref{sec:path1} proves Theorem \ref{thm_1}.Section~\ref{sec:path2}  proves Theorem \ref{thm_2}.

 \section{Preliminaries}
\label{sec:prelim}
Throughout this paper, we let $S = \KK[x_1,\ldots, x_n]$ be a standard graded polynomial ring over a field $\KK$, and let $\mm = (x_1,\ldots, x_n)$ be the maximal homogeneous ideal of $S$. For a finitely generated graded $S$-module $L$, the depth of $L$ is defined as follows:
$$\depth(L) = \min\{i \mid H_{\mm}^i(L) \ne 0\}.$$
Here $H^{i}_{\mm}(L)$ denotes the $i$-th local cohomology module of $L$ with respect to $\mm$. We use the following results throughout the paper.

\begin{Lemma}{\em (\cite[Proposition~1.2.2]{HH})}
\label{colon}
	Let $I$ and $J$ be two monomial ideals. Then,  $I : J$ is a monomial ideal, and
	\[ I : J = \bigcap_{v \in G(J)} I : (v). \]
Furthermore, the set $\{u/\gcd(u,v) : u \in G(I)\}$ is a set of minimal generators of $I : (v)$.
\end{Lemma}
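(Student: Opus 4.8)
The plan is to separate the statement into its general ring-theoretic part and its genuinely monomial part. The identity $I:J = \bigcap_{v\in G(J)} I:(v)$ holds for arbitrary ideals and follows directly from the definition of the colon together with the fact that $J = (G(J))$. I would observe that $f \in I:J$ means $fg \in I$ for every $g \in J$; since every such $g$ is an $S$-linear combination of the generators $v \in G(J)$, the condition ``$fg \in I$ for all $g \in J$'' is equivalent to ``$fv \in I$ for each $v \in G(J)$'', that is, to $f \in \bigcap_{v\in G(J)} I:(v)$. This reduces the whole statement to understanding $I:(v)$ for a single monomial $v$.

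Next I would show that $I:(v)$ is a monomial ideal and compute its generators. The key structural fact is that membership in a monomial ideal is detected monomial-by-monomial: a polynomial lies in $I$ if and only if each of its monomial terms does. Applying this to the product $fv$, a polynomial $f$ satisfies $fv \in I$ if and only if $mv \in I$ for every monomial term $m$ of $f$; hence $I:(v)$ is spanned over $\KK$ by the monomials it contains and is therefore a monomial ideal. It then remains only to identify those monomials, for which I would argue exponent-wise.

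Writing $u = \prod_i x_i^{a_i}$ for $u \in G(I)$, $v = \prod_i x_i^{b_i}$, and a candidate monomial $m = \prod_i x_i^{c_i}$, the condition $mv \in I$ is equivalent to $u \mid mv$ for some $u \in G(I)$, that is, to $a_i \le b_i + c_i$ for all $i$. Since $c_i \ge 0$, this is in turn equivalent to $c_i \ge \max(a_i - b_i, 0)$ for all $i$, which says precisely that the monomial $u/\gcd(u,v) = \prod_i x_i^{\max(a_i-b_i,\,0)}$ divides $m$. Thus a monomial lies in $I:(v)$ if and only if it is a multiple of some $u/\gcd(u,v)$ with $u \in G(I)$, proving that $\{u/\gcd(u,v) : u \in G(I)\}$ generates $I:(v)$. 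The only subtlety I would flag is that this generating set need not consist of \emph{minimal} generators, since one element may divide another; the minimal generating set is obtained by discarding such redundancies. Accordingly I would present $\{u/\gcd(u,v) : u \in G(I)\}$ as a generating set of $I:(v)$ from which a minimal generating set is extracted, which is all that is needed in the applications.
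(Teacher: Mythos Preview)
The paper does not supply its own proof of this lemma; it is quoted from \cite[Proposition~1.2.2]{HH} and used as a black box throughout. Your argument is correct and is essentially the standard one: reduce $I:J$ to a finite intersection over generators of $J$, observe that $I:(v)$ is monomial because membership in $I$ is detected term-by-term, and then compute the monomial generators by an exponent-wise divisibility check. You are also right to flag that $\{u/\gcd(u,v) : u \in G(I)\}$ is in general only a generating set, not a minimal one (e.g.\ $I=(x^2,xy)$, $v=y$ gives $\{x^2,x\}$); this is a known imprecision in the statement, and indeed only the generating-set assertion is ever used in the paper.
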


\begin{Lemma}{\em (\cite[Corollary~1.3]{R})}\label{lem_upperbound} Let $I$ be a monomial ideal and let $f$ be a monomial such that $f \notin I$. Then
    $$\depth (S/I) \le \depth (S/(I:f)).$$
\end{Lemma}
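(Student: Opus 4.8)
The plan is to exhibit $S/(I:f)$ as a submodule of $S/I$ and then extract the depth inequality from the long exact sequence in local cohomology. We may assume $f$ is a nonconstant monomial, since otherwise $f$ is a unit, $I:f=I$, and there is nothing to prove. Because $f\notin I$, the class $\bar f$ is a nonzero element of $\mm$ acting on $B:=S/I$, and by Lemma~\ref{colon} the colon $I:f$ is again a monomial ideal. I would start from the short exact sequence
\[
0\longrightarrow S/(I:f)(-\deg f)\xrightarrow{\;\cdot f\;} S/I\longrightarrow S/(I,f)\longrightarrow 0,
\]
where the first map sends $\bar g\mapsto \overline{gf}$; it is well defined and injective exactly because $gf\in I\iff g\in I:f$, and its cokernel is $S/(I+(f))=S/(I,f)$. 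The degree shift is immaterial for the depth computation.

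The decisive point is that $S/(I,f)=B/\bar f B$ is the quotient of $B=S/I$ by the single element $\bar f\in\mm$. Invoking the elementary inequality $\depth(M/xM)\ge \depth(M)-1$, valid for every $x\in\mm$ and every finitely generated graded module $M$, I would conclude
\[
\depth\big(S/(I,f)\big)\ge \depth(S/I)-1,\qquad\text{equivalently}\qquad \depth\big(S/(I,f)\big)+1\ge \depth(S/I).
\]

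Finally I would feed this into the depth estimate coming from the long exact sequence of local cohomology attached to the displayed sequence. Writing $A=S/(I:f)$, $B=S/I$, $C=S/(I,f)$, the segment $H^{i-1}_{\mm}(C)\to H^{i}_{\mm}(A)\to H^{i}_{\mm}(B)$ shows that $H^i_\mm(A)=0$ whenever both $H^{i-1}_\mm(C)=0$ and $H^i_\mm(B)=0$, so that $\depth A\ge \min\{\depth B,\ \depth C+1\}$. Combined with $\depth C+1\ge \depth B$ from the previous paragraph, this yields $\depth(S/(I:f))=\depth A\ge \depth B=\depth(S/I)$, as desired.

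The one step that deserves care is the middle inequality $\depth(S/(I,f))\ge \depth(S/I)-1$: since $\bar f$ may well be a zero-divisor on $S/I$, one cannot argue by regular sequences, and a naive splitting of the four-term sequence $0\to(0:_B\bar f)\to B\xrightarrow{\;\bar f\;}B\to B/\bar f B\to 0$ into two short exact sequences leads to a circular estimate involving $\depth(\bar f B)$ and $\depth(0:_B\bar f)$. I would therefore isolate and apply the single-element depth-drop inequality as a black box (it is standard, e.g. via the Koszul-homology characterization of depth); everything else in the argument is a formal consequence of the long exact sequence and uses no property of $f$ beyond $f\notin I$.
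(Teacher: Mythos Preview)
The paper does not supply its own proof of this lemma; it is cited directly from \cite{R}. Your proposal, however, has a genuine gap at precisely the step you flag as delicate: the ``single-element depth-drop inequality'' $\depth(B/\bar f B)\ge\depth(B)-1$ is \emph{false} for general finitely generated graded modules $B$ and elements $\bar f\in\mm$. Over $S=\KK[x,y,z]$, take
\[
M=\Coker\begin{pmatrix} y & z\\ -x & -x\end{pmatrix}.
\]
The presentation map is injective, so $\pd_S M=1$ and $\depth M=2$ by Auslander--Buchsbaum; but a direct computation gives $M/xM\cong (S/\mm)\oplus (S/(x))$, which has depth $0$. The drop-by-one bound holds when the element is regular on the module, but not in general, and the Koszul-homology description of depth does not yield it in the generality you claim.

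Even restricted to monomial $I$ and monomial $f$, the strategy becomes circular. From your short exact sequence $0\to S/(I:f)\to S/I\to S/(I,f)\to 0$ the depth lemma gives both
\[
\depth(S/(I:f))\ge\min\{\depth(S/I),\,\depth(S/(I,f))+1\}
\]
and
\[
\depth(S/(I,f))\ge\min\{\depth(S/(I:f))-1,\,\depth(S/I)\},
\]
so the auxiliary bound $\depth(S/(I,f))\ge\depth(S/I)-1$ and the desired conclusion $\depth(S/(I:f))\ge\depth(S/I)$ imply one another; neither can be extracted from this sequence alone. A correct proof must exploit the $\ZZ^n$-grading in an essential way, whereas your argument invokes the monomial hypothesis only to observe that $I:f$ is again monomial.
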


		\begin{Lemma}{\em (\cite[Lemma 2.2]{HT})}
			\label{sum1}
			Let $S_{1}=\KK[x_{1},\dots,x_{m}]$ and $S_{2}=\KK[x_{m+1},\dots,x_{n}]$ be two polynomial rings  over a field $\KK$. Let $S=S_1\otimes_\KK S_2$ and  $I\subset S_{1}$,
			$J\subset S_{2}$ be two non-zero homogeneous  ideals.  Then
		$\depth(S/(I+J))=\depth(S_1/I)+\depth(S_2/J)$.
		\end{Lemma}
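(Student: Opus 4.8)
The plan is to reduce the statement to a clean fact about tensor products over the field $\KK$ and then to compute depth through Koszul homology. First I would observe that, since $I$ and $J$ are extended from $S_1$ and $S_2$ and $\KK$ is a field (so every tensor product in sight is exact), the ideal $I+J = IS + JS$ satisfies
\[
S/(I+J) \;\cong\; (S_1/I)\otimes_{\KK}(S_2/J).
\]
Writing $M = S_1/I$ and $N = S_2/J$, both of which are finitely generated and nonzero because the ideals are proper, it therefore suffices to prove the general identity $\depth_S(M\otimes_\KK N) = \depth_{S_1}(M) + \depth_{S_2}(N)$.

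For this I would invoke the Koszul characterization of depth. Let $\underline{x}' = x_1,\dots,x_m$ and $\underline{x}'' = x_{m+1},\dots,x_n$, so that $\underline{x} = \underline{x}',\underline{x}''$ generates $\mm$. For any finitely generated graded module $L$ over $S$ one has $\depth_S(L) = n - \max\{i : H_i(\underline{x};L)\neq 0\}$, and similarly over $S_1$ and $S_2$. The key structural input is that the Koszul complex is multiplicative, $K_\bullet(\underline{x};S) = K_\bullet(\underline{x}';S_1)\otimes_\KK K_\bullet(\underline{x}'';S_2)$, and that after tensoring with $M\otimes_\KK N$ this yields $K_\bullet(\underline{x};M\otimes_\KK N) \cong K_\bullet(\underline{x}';M)\otimes_\KK K_\bullet(\underline{x}'';N)$.

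Now I would apply the Künneth formula over $\KK$. Since $\KK$ is a field, every module is flat and all Tor terms vanish, so
\[
H_p(\underline{x};M\otimes_\KK N) \;\cong\; \bigoplus_{i+j=p} H_i(\underline{x}';M)\otimes_\KK H_j(\underline{x}'';N).
\]
Set $a = \max\{i : H_i(\underline{x}';M)\neq 0\}$ and $b = \max\{j : H_j(\underline{x}'';N)\neq 0\}$. The decisive point, and the one I would argue carefully, is that over a field the tensor product of two nonzero vector spaces is again nonzero; hence the summand with $i=a$, $j=b$ survives while no strictly larger total degree occurs, so that $\max\{p : H_p(\underline{x};M\otimes_\KK N)\neq 0\} = a+b$. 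Translating back through the Koszul formula gives $\depth_S(M\otimes_\KK N) = n-(a+b) = (m-a)+\bigl((n-m)-b\bigr) = \depth_{S_1}(M)+\depth_{S_2}(N)$, as desired.

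The only real obstacle is bookkeeping: one must track the homological degree conventions correctly and must genuinely use that $\KK$ is a field, both to invoke Künneth with no correction terms and to guarantee that the top homology of the tensor complex is nonzero, which is exactly where the hypothesis that $M$ and $N$ are nonzero enters. As an alternative route that avoids Koszul homology, I could instead tensor minimal graded free resolutions: over the field $\KK$ the tensor product of a minimal resolution of $M$ over $S_1$ with a minimal resolution of $N$ over $S_2$ is again minimal and resolves $M\otimes_\KK N$, whence $\pd_S(M\otimes_\KK N) = \pd_{S_1}(M)+\pd_{S_2}(N)$; combining this with the Auslander--Buchsbaum formula together with $\depth S = n$, $\depth S_1 = m$, and $\depth S_2 = n-m$ yields the same conclusion.
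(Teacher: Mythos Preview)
Your argument is correct. The Koszul--K\"unneth computation is sound: the isomorphism $S/(I+J)\cong (S_1/I)\otimes_\KK(S_2/J)$ is immediate, the multiplicativity of the Koszul complex is standard, and over a field the K\"unneth formula has no Tor correction, so the top nonvanishing Koszul homology sits exactly in degree $a+b$. Your alternative via tensoring minimal free resolutions and Auslander--Buchsbaum is equally valid.

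The paper itself does not prove this lemma at all: it is quoted verbatim as \cite[Lemma~2.2]{HT} and used as a black box. So there is no ``paper's own proof'' to compare against; you have simply supplied a complete argument where the paper relies on a citation. Both of your approaches are among the standard ways to establish this fact, and either would be acceptable as a self-contained justification.
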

	
\begin{Lemma}{\em (\cite[Lemma 2.8]{M})}\label{depth_path} Let $P_n$ be a path with $n$ vertices. Then
$$\depth (S/I(P_n)) = \lceil \frac{n}{3} \rceil.$$
\end{Lemma}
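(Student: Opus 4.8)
The plan is to induct on $n$, and the single idea that makes everything work is to split off the ideal at the vertex $x_2$ adjacent to the leaf $x_1$, rather than at an endpoint of the path. The base cases $n\le 3$ are immediate by direct computation (for instance $\depth(S/I(P_3)) = \depth(\KK[x_1,x_2,x_3]/(x_1x_2,x_2x_3)) = 1$), so assume $n\ge 4$ and that Lemma~\ref{depth_path} holds for all shorter paths.

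First I would use Lemma~\ref{colon} to record the two monomial ideals produced by $x_2$. A direct computation of the minimal generators gives
\[
I(P_n):x_2 = (x_1,x_3) + I(P_{n-3}), \qquad I(P_n)+(x_2) = (x_2) + I(P_{n-2}),
\]
where $I(P_{n-3})$ is the edge ideal of the path on $x_4,\dots,x_n$ and $I(P_{n-2})$ is that of the path on $x_3,\dots,x_n$. In the first quotient $x_2$ is a free variable while $x_1,x_3$ are killed, so $S/(I(P_n):x_2)\cong \bigl(\KK[x_4,\dots,x_n]/I(P_{n-3})\bigr)[x_2]$; in the second, $x_1$ is free and $x_2$ is killed, so $S/(I(P_n)+(x_2))\cong\bigl(\KK[x_3,\dots,x_n]/I(P_{n-2})\bigr)[x_1]$. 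Hence, by Lemma~\ref{sum1} (equivalently, adding $1$ for the free variable) and the induction hypothesis,
\[
\depth\bigl(S/(I(P_n):x_2)\bigr) = \Bigl\lceil\tfrac{n-3}{3}\Bigr\rceil + 1 = \Bigl\lceil\tfrac{n}{3}\Bigr\rceil,
\qquad
\depth\bigl(S/(I(P_n)+(x_2))\bigr) = \Bigl\lceil\tfrac{n-2}{3}\Bigr\rceil + 1.
\]

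The upper bound is then immediate: since $x_2\notin I(P_n)$, Lemma~\ref{lem_upperbound} gives $\depth(S/I(P_n))\le \depth(S/(I(P_n):x_2)) = \lceil n/3\rceil$. For the lower bound I would apply the Depth Lemma to the short exact sequence
\[
0 \to \bigl(S/(I(P_n):x_2)\bigr)(-1) \xrightarrow{\ x_2\ } S/I(P_n) \to S/(I(P_n)+(x_2)) \to 0,
\]
which yields $\depth(S/I(P_n)) \ge \min\{\lceil n/3\rceil,\ \lceil (n-2)/3\rceil + 1\}$. Because lowering the argument by $2$ decreases $\lceil\,\cdot/3\rceil$ by at most one, we have $\lceil (n-2)/3\rceil + 1 \ge \lceil n/3\rceil$, so the minimum is exactly $\lceil n/3\rceil$ with no case analysis. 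Combining the two bounds closes the induction.

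I expect the only genuinely delicate point to be the choice of splitting vertex, so I would flag it explicitly. If one splits at the leaf $x_n$ instead, then $S/(I(P_n)+(x_n))$ has depth $\lceil (n-1)/3\rceil$, which is one too small when $n\equiv 1\pmod 3$; the Depth Lemma then only gives $\depth(S/I(P_n))\ge \lceil n/3\rceil - 1$, and pinning the value down would require analyzing the connecting homomorphism. Splitting at $x_2$ removes a $P_3$'s worth of vertices in the colon but only a $P_2$'s worth in the sum, and it is precisely this asymmetry that makes $\depth(S/(I(P_n):x_2))$ equal to the target value (so that Lemma~\ref{lem_upperbound} gives a sharp upper bound) while keeping the Depth Lemma minimum equal to $\lceil n/3\rceil$ uniformly in the residue of $n$ modulo $3$. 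Everything else reduces to routine ceiling arithmetic.
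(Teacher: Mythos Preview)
Your proof is correct and self-contained. The paper itself does not prove this lemma at all; it simply quotes it as \cite[Lemma~2.8]{M} without argument. Your inductive approach---splitting at the neighbor $x_2$ of a leaf so that the colon side drops by exactly one ``period'' $P_3$---is the standard one, and your observation about why splitting at the leaf $x_n$ would force a connecting-map analysis when $n\equiv 1\pmod 3$ is accurate. The only tool you invoke that is not explicitly stated among the paper's preliminaries is the Depth Lemma for the short exact sequence $0\to S/(I:x_2)\to S/I\to S/(I,x_2)\to 0$, but this is entirely standard and implicit in the paper's later arguments.
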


We now state the following simple lemma, which generalizes \cite[Lemma 4.10]{ZDCL}.

\begin{Lemma}\label{lem_colon_leaf}
Let $G_\ww$ be an edge-weighted graph. Assume that $e=\{x_{n-1}, x_n\}$ is a leaf edge of $G$ with a trivial weight, that is, $x_n$ has a unique neighbor in $G$, namely $x_{n-1}$,  and $\ww(e) = 1$. Then, for all $t \ge 2$, we have
$$I(G_\ww)^t : (x_{n-1}x_n) = I(G_\ww)^{t-1}.$$
\end{Lemma}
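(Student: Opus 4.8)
The inclusion $I(G_\ww)^{t-1} \subseteq I(G_\ww)^t : (x_{n-1}x_n)$ is immediate and requires no hypothesis on $e$: since $x_{n-1}x_n$ is precisely the generator of $I(G_\ww)$ attached to the weight-one leaf edge $e$, any $f \in I(G_\ww)^{t-1}$ satisfies $f\cdot x_{n-1}x_n \in I(G_\ww)^{t-1}\, I(G_\ww) = I(G_\ww)^t$. The whole content is the reverse inclusion, and the plan is to prove it one minimal generator at a time via Lemma~\ref{colon}. Write $I := I(G_\ww)$ for brevity. By Lemma~\ref{colon}, the set $\{\, m/\gcd(m, x_{n-1}x_n) : m \in G(I^t)\,\}$ generates $I^t : (x_{n-1}x_n)$, so it suffices to show $m/\gcd(m, x_{n-1}x_n) \in I^{t-1}$ for every $m \in G(I^t)$. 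Fix such an $m$ and choose a factorization $m = g_1 \cdots g_t$ with each $g_j \in G(I)$; this is possible because every minimal generator of $I^t$ is a $t$-fold product of generators of $I$.

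The key structural observation, and the only place the leaf hypothesis enters, is that $x_n$ occurs in a single generator of $I$, namely $x_{n-1}x_n$, and that this generator carries $x_{n-1}$ with the same exponent. Hence each factor $g_j$ contributing a power of $x_n$ to $m$ contributes an equal power of $x_{n-1}$, giving $\deg_{x_n}(m) \le \deg_{x_{n-1}}(m)$. Since $\gcd(m, x_{n-1}x_n)$ divides $x_{n-1}x_n$, it lies a priori in $\{1, x_{n-1}, x_n, x_{n-1}x_n\}$; but the value $x_n$ alone is excluded, because $x_n \mid m$ forces $x_{n-1} \mid m$. Thus $\gcd(m, x_{n-1}x_n) \in \{1,\, x_{n-1},\, x_{n-1}x_n\}$, and I would dispatch the three cases.

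When the gcd is $1$, there is nothing to do: $m/\gcd(m,x_{n-1}x_n) = m \in I^t \subseteq I^{t-1}$. When the gcd is $x_{n-1}x_n$, then $x_n \mid m$, so some factor equals $x_{n-1}x_n$, say $g_{j_0} = x_{n-1}x_n$; cancelling it yields $m/(x_{n-1}x_n) = \prod_{j \ne j_0} g_j \in I^{t-1}$. When the gcd is $x_{n-1}$, then $\deg_{x_{n-1}}(m) \ge 1$, so some factor $g_{j_0}$ is divisible by $x_{n-1}$, and I would write $m/x_{n-1} = (g_{j_0}/x_{n-1}) \cdot \prod_{j \ne j_0} g_j$, which is a monomial multiple of the element $\prod_{j \ne j_0} g_j \in I^{t-1}$ and hence lies in $I^{t-1}$.

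I do not anticipate a genuine obstacle here beyond careful bookkeeping; the argument is a routine manipulation of monomial factorizations once the reduction to minimal generators via Lemma~\ref{colon} is in place. The one point that must be stated precisely — and the sole use of the hypotheses that $x_n$ has the unique neighbor $x_{n-1}$ and that $\ww(e)=1$ — is the inequality $\deg_{x_n}(m)\le \deg_{x_{n-1}}(m)$, which simultaneously rules out $\gcd(m,x_{n-1}x_n)=x_n$ and supplies the factor $g_{j_0}$ needed in the $\gcd = x_{n-1}$ case.
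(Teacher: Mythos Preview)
Your proof is correct and follows essentially the same approach as the paper: reduce via Lemma~\ref{colon} to minimal generators $m=g_1\cdots g_t$ of $I^t$, then case on $\gcd(m,x_{n-1}x_n)$ using the leaf hypothesis to identify any factor involving $x_n$ as $x_{n-1}x_n$. The only cosmetic difference is that the paper splits first on whether some $g_j$ contains $x_n$, whereas you phrase it via the inequality $\deg_{x_n}(m)\le\deg_{x_{n-1}}(m)$; the content is identical.
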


\begin{proof}
Clearly, the left-hand side contains the right-hand side. According to Lemma \ref{colon},  it is sufficient to prove that if $f$ is a minimal generator of $I(G_\ww)^t$, then
$$g = f / \gcd(f, x_{n-1}x_n) \in I(G_\ww)^{t-1}.$$
Indeed, we can write $f = f_1 \cdots f_t$, where each $f_i$ is a minimal generator of $I(G_\ww)$. If $x_n$ divides $f_j$ for some $j$, then $f_j = x_{n-1} x_n$, since $x_n$ is a leaf of $G$ and $x_{n-1}$ is its unique neighbor. Hence,
$$g = f / \gcd(f, x_{n-1}x_n) =(\prod_{i=1}^{j-1}f_i)(\prod_{i=j+1}^{t}f_i) \in I(G_\ww)^{t-1}.$$

Now, assume that $x_n$ does not divide $f_j$ for any $j = 1, \ldots, t$. Then $\gcd(f, x_{n-1}x_n)$ is either $1$ or $x_{n-1}$. In the first case, $g = f \in I(G_\ww)^t$. In the second case, there exists some $j$ such that $f_j$ is divisible by $x_{n-1}$;  therefore,
$$g \text{ is divisible by } (\prod_{i=1}^{j-1}f_i)(\prod_{i=j+1}^{t}f_i).$$
The conclusion follows.
\end{proof}
\begin{Corollary}\label{cor_dec}
    Let $G_\ww$ be an edge-weighted graph. Assume that $G_\ww$ has a leaf with a trivial weight. Then, for all $t \ge 1$, $\depth (S/I(G_\ww)^t) \ge \depth (S/I(G_\ww)^{t+1})$.
\end{Corollary}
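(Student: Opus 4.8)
The plan is to prove Corollary~\ref{cor_dec} by reducing the depth inequality to the colon identity established in Lemma~\ref{lem_colon_leaf}. The key tool is Lemma~\ref{lem_upperbound}, which says that for any monomial ideal $I$ and any monomial $f\notin I$, one has $\depth(S/I)\le\depth(S/(I:f))$.

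Concretely, first I would relabel the vertices so that the trivial-weight leaf edge is $e=\{x_{n-1},x_n\}$, which is harmless since depth is invariant under permuting the variables. Then for a fixed $t\ge 1$ I would apply Lemma~\ref{lem_upperbound} with the ideal $I=I(G_\ww)^{t+1}$ and the monomial $f=x_{n-1}x_n$. To legitimately invoke that lemma I must check the hypothesis $f\notin I(G_\ww)^{t+1}$: since $x_{n-1}x_n$ has total degree $2$ while every generator of $I(G_\ww)^{t+1}$ has degree at least $2(t+1)\ge 4$, the monomial $x_{n-1}x_n$ cannot lie in $I(G_\ww)^{t+1}$ for $t\ge 1$. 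With the hypothesis verified, Lemma~\ref{lem_upperbound} gives
\[
\depth\bigl(S/I(G_\ww)^{t+1}\bigr)\le\depth\bigl(S/(I(G_\ww)^{t+1}:x_{n-1}x_n)\bigr).
\]

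Now the colon identity does the rest. Applying Lemma~\ref{lem_colon_leaf} with exponent $t+1$ (which is $\ge 2$ exactly when $t\ge 1$, matching the range of the corollary) yields
\[
I(G_\ww)^{t+1}:(x_{n-1}x_n)=I(G_\ww)^{t}.
\]
Substituting this into the displayed inequality gives $\depth(S/I(G_\ww)^{t+1})\le\depth(S/I(G_\ww)^{t})$, which is precisely the claimed monotonicity (written in the reverse order in the statement). This completes the argument for each $t\ge 1$.

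I do not anticipate a genuine obstacle here, since the corollary is essentially a formal consequence of the two preceding lemmas; the only point requiring care is matching the index ranges and confirming the non-membership hypothesis of Lemma~\ref{lem_upperbound}. The one subtlety worth double-checking is that Lemma~\ref{lem_colon_leaf} is stated for $t\ge 2$, so when I apply it at exponent $t+1$ the constraint $t+1\ge 2$ translates to $t\ge 1$, exactly the hypothesis of the corollary; thus the two statements dovetail without any edge case to treat separately.
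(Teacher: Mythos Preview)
Your proof is correct and follows exactly the same approach as the paper, which simply cites Lemmas~\ref{lem_colon_leaf} and~\ref{lem_upperbound}; you have merely filled in the routine details (relabeling, checking $x_{n-1}x_n\notin I(G_\ww)^{t+1}$, and matching the index ranges) that the paper leaves implicit.
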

\begin{proof}
    This conclusion follows from Lemmas \ref{lem_colon_leaf} and  \ref{lem_upperbound}.
\end{proof}

By the results of \cite{ZDCL}, we can assume that $n \ge 4$ and derive the following lower bounds:

\begin{Lemma}\label{lower_bound_1} Let $\ww = (1,\ldots,1,\omega,1,\ldots,1)$ be a weight sequence with the only non-trivial weight $\omega > 1$ at the $a$-th position.   
Let $I = I(P(\ww))$. Then, for all $t \ge 2$,
$$\depth (S/I^t) \ge \begin{cases}
    \max \left\{ \left\lceil \frac{n - t + 1}{3} \right\rceil ,\, 1 \right\}, & \text{if } a = 1, \text{ or } t = 2, a\equiv 1\!\!\!\!\pmod 3 \text{\ and\ } n\equiv 2\!\!\!\! \pmod 4, \\[0.5em]
    \max \left\{ \left\lceil \frac{n - t}{3} \right\rceil,\, 1 \right\}, & \text{otherwise}.
\end{cases}$$
\end{Lemma}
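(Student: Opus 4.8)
The plan is to prove the inequality by a double induction on the pair $(n,t)$, extracting the numerical bound from the depth lemma applied to the short exact sequences attached to a well‑chosen vertex; in substance this reconstructs the lower‑bound half of the estimate recorded in \cite{ZDCL}, to which I would ultimately defer for the base of the induction and for the arithmetic reconciliation. First I would dispose of the preliminaries: the cases $n\le 3$ are small and direct (for $n=2$ the ideal is principal, so $\depth(S/I^t)=1$, and for $n=3$ one is reduced to a weighted star already treated in \cite{ZDCL}), which is what lets us assume $n\ge 4$; and the floor $\max\{\cdot,1\}$ is automatic, since by Corollary~\ref{cor_dec} the function $t\mapsto\depth(S/I^t)$ is non‑increasing while its limiting value for the connected bipartite path is $1$, whence $\depth(S/I^t)\ge 1$ for every $t$.

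For the main estimate I would split along a vertex $f$ and use the depth lemma on
\[
0 \longrightarrow S/(I^t : f) \xrightarrow{\ \cdot f\ } S/I^t \longrightarrow S/(I^t + (f)) \longrightarrow 0,
\]
which gives $\depth(S/I^t)\ge\min\{\depth(S/(I^t:f)),\,\depth(S/(I^t+(f)))\}$, so it suffices to bound the two outer terms. In the generic case $a>1$ I would take $f=x_n$: since $1\le a\le\lfloor n/2\rfloor$ the last edge is trivially weighted, so $S/(I^t+(x_n))$ is, after deleting $x_n$, the $t$‑th power of the edge ideal of the shorter weighted path $P(w_1,\dots,w_{n-2})$ (handled by the induction on $n$, using the reflection symmetry to keep the heavy edge in the first half), while $I^t:x_n$ is described by Lemma~\ref{colon} — every minimal generator of $I^t$ is $(x_{n-1}x_n)^k$ times a product of $t-k$ generators free of $x_n$ — and its leading piece is governed by Lemma~\ref{lem_colon_leaf}, i.e. $I^t:(x_{n-1}x_n)=I^{t-1}$, feeding the induction on $t$. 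For the endpoint case $a=1$ the naive deletion at $x_n$ undershoots by one, so instead I would split at the interior neighbour $x_2$ of the heavy edge $\{x_1,x_2\}$: then $S/(I^t+(x_2))$ disconnects into a free vertex $x_1$ together with a shorter unweighted path, and Lemma~\ref{sum1} adds a clean $+1$, which is exactly the discrepancy between $\lceil(n-t+1)/3\rceil$ and $\lceil(n-t)/3\rceil$.

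I expect the genuine difficulty to be bookkeeping rather than conceptual. The colon ideals $I^t:x_n$ (and $I^t:x_2$ for the heavy leaf) are the awkward objects: their minimal generators must be tracked through Lemma~\ref{colon} and compared against $I^{t-1}$ and lower powers, and one must then verify that the minimum of the two inductive contributions reproduces the claimed ceilings. The sharpest point is isolating the single exceptional alignment $t=2,\ a\equiv 1\!\!\pmod 3,\ n\equiv 2\!\!\pmod 4$, where the bound jumps from $\lceil(n-t)/3\rceil$ to $\lceil(n-t+1)/3\rceil$; this is precisely where the mod‑$3$ arithmetic of the path length interacts with the mod‑$4$ parity controlling the colon ideal, and matching it against the explicit formulas of \cite{ZDCL} — from which the whole estimate descends — is where I would concentrate the effort.
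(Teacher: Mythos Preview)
The paper does not supply its own proof of this lemma: it is stated, without argument, as a direct consequence of \cite{ZDCL} (the prefatory sentence reads ``By the results of \cite{ZDCL}, we can assume that $n\ge4$ and derive the following lower bounds''). Your proposal already acknowledges this and explicitly says you would ``ultimately defer'' to \cite{ZDCL}, so in that narrow sense the two agree and there is nothing to compare.

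Your accompanying sketch, however, has two genuine gaps worth flagging. First, the justification of the floor $\max\{\cdot,1\}$ is circular: you appeal to the limiting depth being~$1$, but Trung's theorem quoted in the introduction is for \emph{simple} edge ideals, and for edge-weighted paths the limit depth is precisely what Theorems~\ref{thm_1} and~\ref{thm_2} are computing --- it equals~$2$, not~$1$, in the two--weight case, so no such general principle is available. One needs a direct argument that $\mm\notin\Ass(S/I^t)$. Second, splitting at the leaf \emph{vertex} $x_n$ does not feed the induction on~$t$ in the way you describe: Lemma~\ref{lem_colon_leaf} gives $I^t:(x_{n-1}x_n)=I^{t-1}$, not $I^t:x_n=I^{t-1}$, and the single-variable colon $I^t:x_n$ is strictly larger (already for the unweighted $P_3$ one has $I^2:x_3=x_2I\neq I$) and does not reduce to a lower power. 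The cleaner move --- and the one that actually invokes Lemma~\ref{lem_colon_leaf} --- is to split at the leaf \emph{edge} $u_{n-1}=x_{n-1}x_n$, so that the colon side is literally $I^{t-1}$; the sum side $(I^t,u_{n-1})$ then requires its own analysis, for instance via a further short exact sequence built from $(I^t+(x_{n-1}))$ and $(I^t+(x_n))$.
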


\begin{Lemma}\label{lower_bound_2} Let $\ww = (1,\ldots,1,\gamma,1,\delta,1,\ldots,1)$ be a weight sequence with the only two non-trivial weights $\gamma>1$ and $\delta > 1$, at the positions $a$ and $a+2$,  respectively. 
Let $I = I(P(\ww))$. Then, for all $t \ge 2$, we have
$$\depth (S/I^t) \ge \begin{cases}
    \max \left\{ \left\lceil \frac{n - t + 1}{3} \right\rceil ,\, 2 \right\}, & \text{if } a = 1, \\[0.5em]
    \max \left\{ \left\lceil \frac{n - t}{3} \right\rceil ,\, 2 \right\}, & \text{otherwise}.
\end{cases}$$
\end{Lemma}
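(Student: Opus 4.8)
The plan is to establish the two quantities inside the maximum independently, since $\depth(S/I^t)\ge \max\{A,B\}$ is the same as proving $\depth(S/I^t)\ge A$ and $\depth(S/I^t)\ge B$ separately. Throughout write $I=I(P(\ww))$ and $S=\KK[x_1,\dots,x_n]$. As recorded just before the statement, \cite{ZDCL} already supplies a lower bound of this kind, so one route is simply to quote the relevant estimate there and check that it simplifies to the stated ceilings once the cases $n<4$ are disposed of by hand. I would, however, prefer a self-contained induction on the pair $(n,t)$ driven by the leaf-peeling lemmas available above, since this also explains the shape of the answer.

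For the ceiling part I would induct using the right-hand leaf. When $n\ge 5$ and $a\le \lfloor n/2\rfloor-1$ the edge $\{x_{n-1},x_n\}$ is trivially weighted, so Lemma \ref{lem_colon_leaf} gives $I^t:(x_{n-1}x_n)=I^{t-1}$, which I feed into the short exact sequence
\[
0\to S/I^{t-1}\xrightarrow{\ \cdot x_{n-1}x_n\ } S/I^t\to S/(I^t+(x_{n-1}x_n))\to 0 .
\]
The depth lemma then yields $\depth(S/I^t)\ge\min\{\depth(S/I^{t-1}),\,\depth(S/(I^t+(x_{n-1}x_n)))\}$. The first term is controlled by the inductive hypothesis in $t$, using $\lceil (n-(t-1))/3\rceil\ge\lceil (n-t)/3\rceil$. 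The second is attacked by a further short exact sequence splitting off the leaf vertex $x_n$: modulo $x_n$ the trivial leaf edge disappears and one recovers the edge ideal of the weighted subpath on $x_1,\dots,x_{n-1}$, with the weights still sitting in positions $a$ and $a+2$, to which the inductive hypothesis in $n$ applies. Iterating peels vertices off the right end, and the bookkeeping of the ceilings reproduces exactly the path-depth count of Lemma \ref{depth_path}, where the recursion terminates on an unweighted path; this is what produces the division by $3$ and the shift by $t$.

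The distinction $a=1$ versus $a\ge 2$ enters through the left end. When $a\ge 2$ the edge $\{x_1,x_2\}$ is trivial and may also be peeled, whereas for $a=1$ it carries the weight $\gamma$ and blocks the left-hand reduction, which is precisely what upgrades $\lceil (n-t)/3\rceil$ to $\lceil (n-t+1)/3\rceil$. The recursion bottoms out when the right end reaches the $\delta$-weighted edge, i.e.\ at $n=a+3$, where the leaf edge is non-trivial and Lemma \ref{lem_colon_leaf} no longer applies; these short configurations must be evaluated directly, most naturally by splitting the path at the two weighted edges and invoking Lemma \ref{sum1} together with the weighted-star and weighted-path depth estimates of \cite{ZDCL}.

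The floor $2$ is the genuinely new feature and the part I expect to be the main obstacle: it is the reason the limiting depth is $2$ rather than the value $0$ or $1$ predicted by Trung's theorem in the unweighted case. Concretely one must show $\depth(S/I^t)\ge 2$ for all $t$; by Corollary \ref{cor_dec} the depth is non-increasing in $t$, so it suffices to bound the eventual value from below by $2$. I would try to produce, after a suitable colon or quotient, a decomposition of the relevant module into two parts separated by the two weighted edges, each of positive depth, and combine them through Lemma \ref{sum1} — the two non-trivial weights at positions $a$ and $a+2$ being exactly what keeps $\mm$ out of the associated primes and prevents the depth from collapsing to $1$. Making this splitting precise for every $t$, reconciling it with the ceiling bound in the range where $\lceil (n-t)/3\rceil$ and $2$ are comparable, and evaluating the weighted-leaf base cases, is the delicate point, and it is where the specific computations of \cite{ZDCL} are most likely indispensable.
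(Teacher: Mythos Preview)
The paper does not give an independent proof of this lemma at all: the sentence immediately preceding Lemmas~\ref{lower_bound_1} and~\ref{lower_bound_2} reads ``By the results of \cite{ZDCL}, we can assume that $n\ge 4$ and derive the following lower bounds,'' and the introduction explicitly says that \cite{ZDCL} already obtained these lower bounds and that the present paper's contribution is to show they are sharp. So the paper's ``proof'' is a bare citation of \cite{ZDCL}.

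You correctly identify this citation route in your first paragraph, and that is precisely what the paper does. Everything after your first paragraph --- the leaf-peeling induction on $(n,t)$, the Mayer--Vietoris splitting of $I^t+(x_{n-1}x_n)$, the discussion of why $a=1$ gains an extra unit --- is additional work that the paper does not attempt here. Your outline for the ceiling term $\lceil (n-t)/3\rceil$ (resp.\ $\lceil (n-t+1)/3\rceil$) is plausible and in the spirit of the appendix-style fragment after the bibliography, but, as you yourself say, the floor~$2$ is the substantive part, and your sketch ultimately defers that to the ``specific computations of \cite{ZDCL}'' as well. So functionally both your proposal and the paper rest on the same external reference; the difference is only that you propose to redo part of the argument by hand while the paper cites the whole inequality wholesale.
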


A graph $H$ is said to be \emph{bipartite} if there exists a decomposition  of $V(H)$ into two disjoint subsets, $X$ and $Y$, such that $E(H) \subseteq X \times Y$.
If $E(H) = X \times Y$, then $H$ is called a \emph{complete bipartite graph} and is denoted by $K_{X,Y}$.
We recall the notion of \emph{bipartite completion} introduced in \cite{MTV}.

Let $G$ be a simple graph, and let $\mathbf{e} = \{e_1, \ldots, e_t\}$ be a collection of edges of $G$, and let $N[\mathbf{e}]$
denote   the \emph{closed neighborhood} of $\mathbf{e}$ in $G$, i.e., the set of vertices that are in $e_1, \ldots, e_t$ or are neighbors of some vertex in $e_1, \ldots, e_t$.
Let $G[\mathbf{e}]$ and  $G[\overline{\mathbf{e}}]$ denote the induced subgraphs of $G$ on the sets $N[\mathbf{e}]$ and  $V(G)\setminus N[\mathbf{e}]$), respectively.

\begin{Definition}
Let $G$ be a simple graph, and let $\mathbf{e} = \{e_1, \ldots, e_t\}$ a collection of edges of $G$.
Assume that $G[\mathbf{e}]$ is bipartite with bipartition $N[\mathbf{e}] = U \cup V$.
The \emph{bipartite completion} of $G$ with respect to $\mathbf{e}$, denoted by $\widetilde{G[\mathbf{e}]}$, is the graph obtained from $G$  by adding all edges $\{u,v\}$ for $u \in U$ and $v \in V$.
\end{Definition}

By the proof of \cite[Lemma~3.1]{HHV}, we have the following.

\begin{Lemma}\label{lem_depth_bipartite_completion}
Let $G$ be a simple graph such that $G[\mathbf{e}]$ is bipartite and $G[\overline{\mathbf{e}}]$ is weakly chordal,
i.e., both  $G[\overline{\mathbf{e}}]$ and its complement have no induced cycles of length greater than $4$, and $\mathbf{e}$ is connected when viewed as a subgraph of $G$. Then
\[
\depth \left ( S /I(\widetilde{G[\mathbf{e}]}) \right ) \le 1 + \depth \left (R / I(G[\overline{\mathbf{e}}])\right ),
\]
where $R$ is the polynomial ring in the variables corresponding to $G[\overline{\mathbf{e}}]$.
\end{Lemma}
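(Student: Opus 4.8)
The plan is to analyze the three-part structure of $\widetilde{G[\mathbf{e}]}$ and to isolate the contribution of the complete bipartite core. Write $W = N[\mathbf{e}]$, $\overline{W} = V(G) \setminus N[\mathbf{e}]$, and let $W = U \cup V$ be the bipartition from the hypothesis. In $\widetilde{G[\mathbf{e}]}$ the induced subgraph on $W$ is the complete bipartite graph $K_{U,V}$, the induced subgraph on $\overline{W}$ is $G[\overline{\mathbf{e}}]$, and every edge joining $W$ to $\overline{W}$ — call these the \emph{cross edges} — has its $W$-endpoint in the boundary set $B = W \setminus V(\mathbf{e})$; indeed, a neighbour of a vertex of $\mathbf{e}$ lies in $N[\mathbf{e}]$, so no vertex of $V(\mathbf{e})$ can be adjacent to $\overline{W}$. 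I first record the key numerical input: since $I(K_{U,V}) = (\mathbf{x}_U) \cap (\mathbf{x}_V)$, a Mayer--Vietoris computation gives $\depth(\KK[W]/I(K_{U,V})) = 1$, and this is the source of the additive $1$ in the statement.

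Next I would try to realise this $+1$ by a single colon via Lemma~\ref{lem_upperbound}. Choosing $u_0 \in U \cap V(\mathbf{e})$, which exists because $\mathbf{e}$ is connected with at least one edge so both colour classes meet $V(\mathbf{e})$, its neighbourhood in $\widetilde{G[\mathbf{e}]}$ is exactly $V$ (it has no cross edges). Using Lemma~\ref{colon} one computes $I(\widetilde{G[\mathbf{e}]}) : x_{u_0} = (\mathbf{x}_V) + I(H)$, where $H$ is obtained by deleting $u_0$ and the vertices of $V$; here $x_{u_0}$ becomes a free variable, which already produces the $+1$. The obstruction, and the main difficulty, is that the remaining core vertices $U \setminus \{u_0\}$ and the cross edges survive this colon and in general inflate the depth above $1 + \depth(R/I(G[\overline{\mathbf{e}}]))$: the symmetry of $K_{U,V}$ means no single colon can isolate one core vertex while collapsing the rest, since any monomial meeting both $U$ and $V$ already lies in the ideal.

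I therefore expect the sharp bound to require the short exact sequence $0 \to S/(I : x_{u_0}) \xrightarrow{\,\cdot x_{u_0}\,} S/I \to S/(I + (x_{u_0})) \to 0$ together with the depth lemma, inducting on the size of the core $|U| + |V|$: passing to $S/(I + (x_{u_0}))$ deletes $u_0$ and leaves a graph with the smaller complete bipartite core $K_{U \setminus \{u_0\}, V}$, to which the inductive hypothesis applies, while $S/(I : x_{u_0})$ carries the free variable $x_{u_0}$. This is exactly the scheme in the proof of \cite[Lemma~3.1]{HHV}, and I would follow it, tracking at each step that the depth inequalities point in the direction of an upper bound.

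The hypotheses enter precisely at the terminal step of the induction, where the core has shrunk and only $G[\overline{\mathbf{e}}]$ together with its cross edges remains. Here the weak chordality of $G[\overline{\mathbf{e}}]$ (and of its complement) is what guarantees that the cross edges contribute nothing beyond $\depth(R/I(G[\overline{\mathbf{e}}]))$, so that the accumulated bound is exactly $1 + \depth(R/I(G[\overline{\mathbf{e}}]))$. The main obstacle is thus the coupling between $W$ and $\overline{W}$ through the cross edges, which prevents a direct splitting by Lemma~\ref{sum1}; controlling their effect on the depth via weak chordality, as in \cite{HHV}, is the crux of the argument.
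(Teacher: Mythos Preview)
The paper does not give a self-contained proof of this lemma; it simply records that the inequality follows from the proof of \cite[Lemma~3.1]{HHV}. Your proposal is therefore not competing with an argument in the paper but attempting to reconstruct the cited one, and you say as much.

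Your structural analysis is correct: the induced subgraph on $W=N[\mathbf e]$ in $\widetilde{G[\mathbf e]}$ is $K_{U,V}$, a vertex $u_0\in U\cap V(\mathbf e)$ has neighbourhood exactly $V$, and a single colon by $x_{u_0}$ produces one free variable but leaves the rest of $U$ and the cross edges in place, so it overshoots. That diagnosis is right. The gap is in the remedy. The short exact sequence $0\to S/(I:x_{u_0})\to S/I\to S/(I,x_{u_0})\to 0$ together with the depth lemma yields the \emph{lower} bound $\depth(S/I)\ge\min\{\depth S/(I:x_{u_0}),\depth S/(I,x_{u_0})\}$; it does not by itself give an upper bound on $\depth(S/I)$. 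Extracting an upper bound from the sequence requires comparing the depths of the two outer terms (for instance, if $\depth S/(I:x_{u_0})>\depth S/(I,x_{u_0})+1$ one can force equality), and your outline does not supply that comparison. ``Tracking that the inequalities point in the direction of an upper bound'' is exactly the step that is missing.

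Likewise, the terminal use of weak chordality is asserted but not explained. In \cite{HHV} this hypothesis is what allows one to compute, not merely bound, the depth at the base of the induction (weakly chordal graphs have depth governed by a combinatorial invariant), and that computation is what pins the answer to $\depth(R/I(G[\overline{\mathbf e}]))$ rather than something larger coming from the cross edges. Without making that mechanism explicit your outline remains a plan rather than a proof; since the paper itself only cites \cite{HHV}, the honest resolution is to read that proof and verify that it indeed yields the present statement.
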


\section{The case of a weight sequence with one non-trivial weight}
		\label{sec:path1}
In this section, we will prove Theorem~\ref{thm_1}. We establish the following notation: Given a non-zero monomial $f \in S$, its \emph{support}, denoted by $\supp(f)$, is the set of indices $j$ for which  $x_j$ divides $f$, and its largest index in $\supp(f)$ is  denote by $\last(f)$. For each $j = 1, \ldots, n$, let $\deg_j(f)$ denote the exponent of $x_j$ in $f$.

In the following, we compute colon ideals of the form $I^t : f$. First, we make  some general observations.

\begin{Lemma}\label{lem_colon_incl}
Let $G_\ww$ be an edge-weighted graph. Assume that $x_n$ is a leaf of $G$ with its unique neighbor $x_{n-1}$.
Let $f$ be a non-zero monomial of $S$ such that $\supp(f) \cap \{n-1,n\} = \emptyset$.
Then, for all $t \ge 1$, we have
\[
I(G_\ww)^t : f \subseteq (I(H_\ww)^t : f) \;+\; I(G_\ww),
\]
where $H$ is the induced subgraph of $G$ on $V(G) \setminus \{x_n\}$.
\end{Lemma}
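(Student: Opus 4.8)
The plan is to prove the containment $I(G_\ww)^t : f \subseteq (I(H_\ww)^t : f) + I(G_\ww)$ by working generator by generator. By Lemma~\ref{colon}, it suffices to take a minimal generator $m$ of $I(G_\ww)^t : f$ and show that $m$ lies in the right-hand side. Equivalently, I take an arbitrary monomial $g \in I(G_\ww)^t : f$, so that $gf \in I(G_\ww)^t$, and I must show that either $g \in I(G_\ww)$ (placing it in the second summand) or $g \in I(H_\ww)^t : f$ (placing it in the first summand). The natural dichotomy to exploit is whether or not the variable $x_n$ is involved in a factorization of $gf$.

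First I would write $gf = h_1 \cdots h_t$ as a product of $t$ minimal generators of $I(G_\ww)$, which is possible since $gf \in I(G_\ww)^t$. Because $x_n$ is a leaf of $G$ with unique neighbor $x_{n-1}$, the only minimal generator of $I(G_\ww)$ divisible by $x_n$ is $(x_{n-1}x_n)^{\ww(e)} = x_{n-1}x_n$ (using $\ww(e)=1$ if that leaf edge is trivial; more generally $x_{n-1}^{\ww}x_n^{\ww}$), so $x_n$ can only enter the product through one of these edge-factors. Now split into two cases. \textbf{Case 1:} some $h_j$ involves $x_n$, i.e.\ $h_j$ is the generator on the leaf edge $\{x_{n-1},x_n\}$. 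Since $\supp(f)\cap\{n-1,n\}=\emptyset$, the variable $x_n$ does not come from $f$, so this factor $h_j$ must divide $g$. Hence $g$ is divisible by a minimal generator of $I(G_\ww)$, giving $g \in I(G_\ww)$, which is what we want.

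\textbf{Case 2:} no $h_j$ involves $x_n$. Then every $h_j$ is a minimal generator of $I(G_\ww)$ not divisible by $x_n$, which is precisely a minimal generator of $I(H_\ww)$, where $H = G \setminus \{x_n\}$. Therefore $gf = h_1\cdots h_t \in I(H_\ww)^t$. Since $g$ and $f$ are monomials in the variables of $H$ together with $x_n$, and $x_n$ divides neither $f$ (by hypothesis) nor $gf$ (by this case), I conclude $x_n \nmid g$, so $g$ is a monomial in the variables of $H$. From $gf \in I(H_\ww)^t$ we then read off $g \in I(H_\ww)^t : f$, as desired. Combining the two cases yields the claimed inclusion.

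The only subtle point I anticipate is the bookkeeping in Case~2: one must confirm that the monomials $g$ and $f$ genuinely lie in the subring $\KK[x_1,\dots,x_{n-1}]$ attached to $H$ so that the membership $gf \in I(H_\ww)^t$ can be transported back to a colon statement inside that subring. This is guaranteed by the hypothesis $\supp(f)\cap\{n-1,n\}=\emptyset$ together with the case assumption $x_n \nmid gf$, but it is worth stating explicitly since the ambient ring $S$ still contains the variable $x_n$. No genuine obstacle arises; the argument is a clean case analysis driven entirely by the leaf structure at $x_n$.
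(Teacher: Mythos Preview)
Your proof follows the same case split as the paper: either some factor in a decomposition of $gf$ is the leaf-edge generator (then that generator divides $g$ since $\supp(f)\cap\{n-1,n\}=\emptyset$, so $g\in I(G_\ww)$), or no factor involves $x_n$ (then $gf\in I(H_\ww)^t$, so $g\in I(H_\ww)^t:f$). One small slip: from $gf\in I(G_\ww)^t$ you only obtain $h_1\cdots h_t \mid gf$, not equality, so fix that phrasing; this changes nothing in either case, and your worry about whether $x_n\mid g$ in Case~2 becomes unnecessary since the colon ideal is computed in $S$.
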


\begin{proof} According to Lemma \ref{colon},
we can assume that
\[
h \;=\; \frac{\prod_{i=1}^{t}g_i }{\gcd(\prod_{i=1}^{t}g_i, f)}
\]
is  a minimal generator of $I(G_\ww)^t : f$, where $g_1, \ldots, g_t$ are minimal generators of $I(G_\ww)$.
We can assume that $\last(g_1) \le \cdots \le \last(g_t)$.

If $\last(g_t) = n$, then $g_t \;=\; (x_{n-1} x_n)^{\ww(x_{n-1} x_n)}$. In particular, $\supp(g_t) \cap \supp(f) = \emptyset$, and thus $h$ is divisible by $g_t$. Therefore,  $h \in I(G_\ww)$.

If $\last(g_t) < n$, then $g_i \in I(H_\ww)$ for all $i = 1, \ldots, t$, so $h \in I(H_\ww)^t : f$. The conclusion follows.
\end{proof}

\begin{Lemma}\label{lem_colon_product_edges}
Let $G_\ww$ be an edge-weighted graph, and let $\eb = \{e_1, \ldots, e_{t-1}\}$ be a collection of edges of $G$.
Assume that $\eb $ is connected when viewed as a subgraph of  $G$, and that  $G[\eb]$ is bipartite.
 Assume that the weight function $\ww$ on the edges in the closed neighborhood of $\eb$ is trivial.
Then
\[
I(G_\ww)^t :  \prod_{i=1}^{t-1}e_i \;=\; I\bigl(\widetilde{G[\eb]}_{\ww'}\bigr),
\]
where the weight function $\ww'$ is the extension of $\ww$ to the edges of $\widetilde{G[\eb]}$ and  assigns trivial weights to all new edges.
\end{Lemma}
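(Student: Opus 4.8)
The plan is to prove the asserted equality of monomial ideals by double inclusion, invoking Lemma~\ref{colon} to reduce each containment to a statement about monomial generators. Throughout I use that every edge of $G$ meeting $N[\eb]$ carries trivial weight, so the generators of $I(\widetilde{G[\eb]}_{\ww'})$ are the squarefree quadrics $uv$ for $u\in U$, $v\in V$ (the complete bipartite graph $K_{U,V}$ on $N[\eb]$), together with the unchanged generators of edges of $G$ having an endpoint outside $N[\eb]$. I also record the two structural facts that drive both directions: since $G[\eb]$ is bipartite with parts $U,V$, no edge of $G$ contained in $N[\eb]$ lies within $U$ or within $V$; and $f=\prod_{i=1}^{t-1}e_i$ is supported on $\supp(\eb)$ with $\deg_U(f)=\deg_V(f)=t-1$.

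For the inclusion $I(\widetilde{G[\eb]}_{\ww'})\subseteq I(G_\ww)^t:f$, the unchanged generators $g$ satisfy $g\cdot f\in I(G_\ww)^t$ at once, since $f$ is a product of $t-1$ generators of $I(G_\ww)$. The substantive point is that each new quadric $uv$ satisfies $uv\cdot f\in I(G_\ww)^t$; equivalently, that the bipartite multigraph $M$ on $U\cup V$ with edge multiset $\{e_1,\dots,e_{t-1},\{u,v\}\}$ admits a realization of its degree sequence by $t$ genuine (trivially weighted) edges of $G[\eb]$. I would produce such a realization by an alternating rerouting: starting from the realization given by the $e_i$ themselves, choose a path $u=p_0,p_1,\dots,p_{2s+1}=v$ in $G[\eb]$ (necessarily of odd length, as $u\in U$ and $v\in V$) whose interior runs through $\eb$, and then add the edges $\{p_{2j},p_{2j+1}\}$ while deleting the edges $\{p_{2j-1},p_{2j}\}$. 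This leaves every internal degree unchanged and raises the degrees at $u$ and $v$ by $1$, yielding a factorization of $uv\cdot f$ into $t$ edges of $G[\eb]$, hence a product of $t$ generators of $I(G_\ww)$.

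The main obstacle lies precisely here: the deletion steps are legitimate only if each removed edge is present with positive multiplicity in the current realization, so the path must be chosen so that all of its deletion edges lie in $\eb$. I would handle this by connecting $u$ and $v$ to $\supp(\eb)$ through genuine access edges (placing those at addition positions) and routing the interior along a simple path of $\eb$, which the connectivity of $\eb$ supplies; the odd-length parity bookkeeping ensures the access edges fall on addition positions and that every deletion edge is an $\eb$-edge, hence available with multiplicity at least one.

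For the reverse inclusion, I take a minimal generator $h=w/\gcd(w,f)$ of $I(G_\ww)^t:f$ with $w=g_1\cdots g_t\in G(I(G_\ww)^t)$, and show $h$ is divisible by a generator of $I(\widetilde{G[\eb]}_{\ww'})$. If $\supp(h)\cap N[\eb]$ meets both $U$ and $V$, then $h$ is divisible by the corresponding quadric and we are done. Otherwise, say $\supp(h)\cap N[\eb]\subseteq U$; then $\deg_x(w)\le\deg_x(f)$ for all $x\in V$, whence $\deg_V(w)\le\deg_V(f)=t-1<t$, so some edge $g_{i_0}$ has no endpoint in $V$. By bipartiteness of $G[\eb]$, $g_{i_0}$ has at most one endpoint in $N[\eb]$, and any endpoint of $g_{i_0}$ lying outside $V$ is either outside $N[\eb]$ or in $U\setminus\supp(\eb)$; in both cases $f$ contributes nothing to that variable, so none of its powers are cancelled. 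Hence the full monomial of $g_{i_0}$ survives in $h$, exhibiting an unchanged edge generator dividing $h$, a contradiction. This completes the plan; I expect the routing and availability argument of the third paragraph to be the only genuinely delicate step.
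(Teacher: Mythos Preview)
Your argument is correct in substance, but it takes a markedly different route from the paper. The paper's proof is almost entirely a reduction to \cite[Lemma~2.7]{HHV}: the inclusion $I(\widetilde{G[\eb]}_{\ww'})\subseteq I(G_\ww)^t:f$ is cited directly, and for the reverse inclusion the paper observes that if some $g_i$ misses $\supp(\eb)$ then $g_i\mid h$, and otherwise every $g_i$ lies in the (trivially weighted) induced subgraph on $N[\eb]$, where \cite[Lemma~2.7]{HHV} applies once more. By contrast, you reprove both inclusions from scratch. Your path-rerouting argument for $\supseteq$ is essentially a hands-on unwinding of the even-connection content behind \cite[Lemma~2.7]{HHV}; your parity/availability analysis is right (the key point that an endpoint of $g_{i_0}$ in $U$ must lie outside $\supp(\eb)$, since otherwise its neighbor would fall in $N[\eb]$, is exactly what makes $\supp(g_{i_0})\cap\supp(f)=\emptyset$). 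The trade-off is clear: the paper's version is a two-line corollary of an external lemma, while yours is self-contained and makes the combinatorics visible.

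Two small points. First, the word ``contradiction'' at the end of your $\subseteq$ paragraph is out of place: no hypothesis was assumed for contradiction, and what you have actually shown is that $g_{i_0}$ divides $h$, which finishes that case directly. Second, your rerouting sketch, while correct, should in a final write-up make explicit the four cases according to whether $u,v$ lie in $\supp(\eb)$ or only in $N[\eb]\setminus\supp(\eb)$, and verify in each that the access edges sit at addition positions while all deletion edges lie on a simple path in $\eb$ (hence are distinct and available). You have indicated this, but only implicitly.
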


\begin{proof}
According to \cite[Lemma~2.7]{HHV}, the left-hand side contains the right-hand side.
Denote
\[
\supp(\eb) \;=\; \supp(e_1) \cup \cdots \cup \supp(e_{t-1}).
\]
According to Lemma \ref{colon},  we can assume that
\[
h \;=\;\frac{\prod_{i=1}^{t}g_i }{\gcd(\prod_{i=1}^{t}g_i, f)},
\]
where $f=e_1 \cdots e_{t-1}$ and $g_1, \ldots, g_t$ are minimal generators of $I(G_\ww)$.

If $\supp(g_i) \cap \supp(\eb) = \emptyset$ for some $i$, then $h$ is divisible by $g_i$ and thus $h \in I(G_\ww)$.
Therefore, we can assume that $\supp(g_i) \cap \supp(\eb) \neq \emptyset$ for all $i = 1, \ldots, t$.
In this case, $h$ belongs to $J^t : f$, where $J$ is the restriction of $I(G_\ww)$ to the closed neighborhood of $\eb$, and the
conclusion  follows from \cite[Lemma~2.7]{HHV}.
\end{proof}

 Let $u_j = x_j x_{j+1}$ for any $j=1,\ldots, n-1$. Let $\ww = (1, \ldots, 1, \omega, 1, \ldots, 1)$ be a sequence of $n-1$ weights, with a unique non-trivial weight $\omega$, at the $a$-th position,
where $1\le a \le \lfloor \frac{n}{2}\rfloor$. Define
\[
I = I(P(\ww)) = (u_1, \ldots, u_{a-1}, u_a^{\omega}, u_{a+1}, \ldots, u_{n-1}).
\]

We now distinguish several classes of monomials and study the corresponding quotients of powers of $I$.  By convention, we can assume that $\prod\limits_{i=p}^{q}u_i=1$ if  $p>q$.

\begin{Lemma}\label{lem_w_1_1}
Assume that  $2 \le t \le n - a - 1$. Let
\[
f_{a,t} \;=\; x_{a+2}\, u_a^\omega \prod\limits_{i=a+2}^{a+t-1}u_i.
\]
Then, for any $t\ge 2$, we have
\[
I^t : f_{a,t} \;=\begin{cases}
	I+(\bigcup\limits_{i=2}^{t+2}\{x_{i}\}), &\text{if $a=1$,}\\
	I \;+\;	(x_{a-1})\,+\, (\bigcup\limits_{i=a+1}^{a+t+1}\{x_{i}\}), &\text{if $a\ge 2$.}\\
\end{cases}
\]
In particular,
\[
\depth\!\left(S / (I^t : f_{a,t})\right)
\;=\;
1 \;+\;\left\lceil \frac{a - 2}{3} \right\rceil\;+\;
\left\lceil \frac{n - t - a - 1}{3} \right\rceil.
\]
\end{Lemma}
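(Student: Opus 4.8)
The plan is to establish the displayed equality of ideals by double inclusion, and then read off the depth by decomposing the resulting quotient.

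For the inclusion $\supseteq$, I would first observe that $f_{a,t}\in I^{t-1}$: indeed $f_{a,t}=u_a^{\omega}\cdot x_{a+2}\prod_{i=a+2}^{a+t-1}u_i$ is the product of the $t-1$ generators $u_a^{\omega},u_{a+2},\dots,u_{a+t-1}$ times the extra factor $x_{a+2}$, so $I\cdot f_{a,t}\subseteq I^t$ and $I\subseteq I^t:f_{a,t}$. It then remains to check $v\,f_{a,t}\in I^t$ for each listed variable $v$, i.e.\ that $v\,f_{a,t}$ is divisible by a product of $t$ minimal generators of $I$. Here the hypothesis $\omega>1$ is decisive: since $\deg_a(f_{a,t})=\deg_{a+1}(f_{a,t})=\omega\ge 2$, there are spare copies of $x_a$ and $x_{a+1}$, which lets one bypass the heavy generator $u_a^{\omega}$ and substitute the light generators $u_{a-1}$ and $u_{a+1}$ (the latter used up to twice). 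For instance $x_{a+1}f_{a,t}=u_a^{\omega}u_{a+1}u_{a+2}\cdots u_{a+t-1}$, while (for $t\ge 3$) $x_{a-1}f_{a,t}$ is divisible by $u_{a-1}u_{a+1}^2u_{a+3}u_{a+4}\cdots u_{a+t-1}$, and $u_{a-1}u_{a+1}$ works when $t=2$; for the remaining $x_{a+j}$ one selects between a factorization containing $u_a^{\omega}$ and one built from $u_{a+1}^2$ according to the parity of the distance from $a+j$ to the nearest endpoint of $\supp(f_{a,t})$.

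For the inclusion $\subseteq$ I would use Lemma~\ref{colon}: a minimal generator has the form $h=\prod_{i=1}^t g_i/\gcd\!\bigl(\prod_{i=1}^t g_i,\,f_{a,t}\bigr)$ with each $g_i\in G(I)$, and I argue by contradiction, assuming $h\notin I$ and $\supp(h)\cap\{a-1,a+1,\dots,a+t+1\}=\emptyset$. If some $g_i$ had support disjoint from $\supp(f_{a,t})=\{a,a+1,\dots,a+t\}$ then $g_i\mid h$, forcing $h\in I$; hence every $g_i$ lies in $\{u_{a-1},u_a^{\omega},u_{a+1},\dots,u_{a+t}\}$. Since $a-1,a+t+1\notin\supp(f_{a,t})$, the conditions $\deg_{a-1}(h)=\deg_{a+t+1}(h)=0$ rule out $u_{a-1}$ and $u_{a+t}$, so in fact $g_i\in\{u_a^{\omega},u_{a+1},\dots,u_{a+t-1}\}$. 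Writing $b_k$ for the number of factors equal to $u_k$ (with $u_a$ read as $u_a^{\omega}$), we have $\sum_{k=a}^{a+t-1}b_k=t$, and the requirements $\deg_j(h)=0$ for $a+1\le j\le a+t$ become $\omega b_a+b_{a+1}\le\omega$, together with $b_{k-1}+b_k\le 2$ for $a+2\le k\le a+t-1$ and $b_{a+t-1}\le 1$.

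The contradiction is then purely combinatorial, resting on the elementary fact that nonnegative integers $c_1,\dots,c_m$ with $c_i+c_{i+1}\le 2$ and $c_m\le 1$ satisfy $\sum_{i}c_i\le m$ (pair consecutive terms, absorbing the leftover via $c_m\le1$ when $m$ is odd). Applied to $(b_{a+1},\dots,b_{a+t-1})$ this gives $\sum_{k=a+1}^{a+t-1}b_k\le t-1$, hence $b_a\ge1$; with $b_a\le1$ (from $\omega b_a\le\omega$) we get $b_a=1$, and then $\omega b_a+b_{a+1}\le\omega$ forces $b_{a+1}=0$, so $\sum_{k=a+2}^{a+t-1}b_k=t-1$. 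But the same fact applied to $(b_{a+2},\dots,b_{a+t-1})$ bounds this sum by $t-2$, a contradiction; thus $h$ belongs to the right-hand side. The case $a=1$ is identical, with $u_{a-1}$ and $x_{a-1}$ simply absent. For the depth, I would substitute the computed colon and set the variables $x_{a-1},x_{a+1},\dots,x_{a+t+1}$ equal to zero (omitting $x_{a-1}$ if $a=1$); what remains of $I$ is $I(P_{a-2})$ on $x_1,\dots,x_{a-2}$ and $I(P_{n-a-t-1})$ on $x_{a+t+2},\dots,x_n$, with $x_a$ now isolated. Hence $S/(I^t:f_{a,t})\cong\bigl(S_1/I(P_{a-2})\bigr)\otimes_{\KK}\KK[x_a]\otimes_{\KK}\bigl(S_2/I(P_{n-a-t-1})\bigr)$, and Lemmas~\ref{sum1} and~\ref{depth_path} yield $\depth=\lceil(a-2)/3\rceil+1+\lceil(n-a-t-1)/3\rceil$, the degenerate paths ($a\le2$, or $t=n-a-1$) contributing $0$ under the usual conventions. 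I expect the inclusion $\supseteq$ to be the main obstacle: whereas $\subseteq$ reduces to the clean counting bound above, constructing the degree-$t$ factorizations of $v\,f_{a,t}$ for every $v$ needs a short parity-driven case analysis, and it is precisely there that $\omega\ge2$ is indispensable.
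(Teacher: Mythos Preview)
Your argument is correct, and it follows a genuinely different route from the paper's.

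For the inclusion $\supseteq$, the paper does not verify $v\,f_{a,t}\in I^t$ variable by variable. Instead it first applies Lemma~\ref{lem_colon_product_edges} to the trailing product $\prod_{i=a+2}^{a+t-1}u_i$, obtaining $L=I+I(K_{U,V})$ with $U\cup V=\{x_{a+1},\dots,x_{a+t+1}\}$, and then peels off the remaining factor $x_{a+2}\,u_a^{\omega}$ via the chain
\[
I^t:f_{a,t}\ \supseteq\ (IL):(x_{a+2}\,u_a^{\omega})\ \supseteq\ I+(L:x_{a+2})+(L:x_{a+1}^{\omega-1})+(I:x_a^{\omega}),
\]
from which every listed variable drops out at once. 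Your direct construction of degree-$t$ factorizations of $v\,f_{a,t}$ is equally valid and more elementary; the parity scheme you sketch (use $u_a^{\omega}\,u_{a+2}^{2}\,u_{a+4}^{2}\cdots$ when the relevant distance is odd, and $u_{a+1}^{2}\,u_{a+3}^{2}\cdots$ when it is even) indeed covers all $v\in\{x_{a-1},x_{a+1},\dots,x_{a+t+1}\}$, though in a full write-up you should display the two families explicitly rather than leave them to the reader.

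For the inclusion $\subseteq$, the contrast is sharper. The paper argues by a nested induction on $n$ and on $a$, repeatedly invoking Lemma~\ref{lem_colon_incl} (peeling off a leaf whose support misses $f_{a,t}$) together with Lemma~\ref{lem_colon_leaf} to reduce the exponent $t$, and checking small base cases by hand. Your approach avoids induction entirely: after localizing the $g_i$ to $\{u_a^{\omega},u_{a+1},\dots,u_{a+t-1}\}$, you translate the vanishing of $\deg_j(h)$ into the system $\omega b_a+b_{a+1}\le\omega$, $b_{k-1}+b_k\le 2$, $b_{a+t-1}\le 1$, and close with the clean inequality ``$c_i+c_{i+1}\le 2$ and $c_m\le 1$ imply $\sum c_i\le m$''. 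This is correct and considerably shorter; the trade-off is that the paper's inductive template is reused verbatim in Lemmas~\ref{lem_w_1_2} and~\ref{lem_w_1_3}, whereas your counting argument would need to be redone (with messier inequalities) in those settings.

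The depth computation is handled the same way in both proofs.
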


\begin{proof}
Let $U$ and $V$ be a bipartition of the set $\{x_{a+1},x_{a+2}, \ldots, x_{a+t+1}\}$ such that $U$ consists of vertices with odd indices and $V$ consists of vertices with even indices.
Set $L = I + I(K_{U,V})$.
By Lemma~\ref{lem_colon_product_edges},
\begin{align*}
I^t : f_{a,t}
&\supseteq (I L) : (x_{a+2}\, u_a^\omega) \supseteq I \;+\; (L : x_{a+2}) \;+\; (L : x_{a+1}^{\omega-1}) \;+\; (I : x_a^\omega) \\
&\supseteq
\begin{cases}
	I+(\bigcup\limits_{i=2}^{t+2}\{x_{i}\}),  &\text{if $a=1$,}\\
	I \;+\;	(x_{a-1})\,+\, (\bigcup\limits_{i=a+1}^{a+t+1}\{x_{i}\}),  &\text{if $a\ge 2$.}\\
\end{cases}
\end{align*}

It remains to prove the reverse inclusion
\[
I^t : f_{a,t}
\;\subseteq\;
K \;:=
\begin{cases}
	I+(\bigcup\limits_{i=2}^{t+2}\{x_{i}\}),  &\text{if $a=1$,}\\
I \;+\;	(x_{a-1})\,+\, (\bigcup\limits_{i=a+1}^{a+t+1}\{x_{i}\}),  &\text{if $a\ge 2$.}\\
\end{cases}
\]

According to Lemma \ref{colon},  we can assume that
\[
h \;=\; \frac{\prod_{i=1}^{t}g_i}{\gcd(\prod_{i=1}^{t}g_i, f_{a,t})} \;\in\; I^t : f_{a,t},
\]
where $g_1, \ldots, g_t$ are minimal generators of $I$ such that $\last(g_1) \le \last(g_2) \le \cdots \le \last(g_t)$.
There are two cases to consider.

(1) If $a=1$, then based on the assumption, $2\le t\le n-2$, which implies that $n\ge 4$. We will prove this inclusion relation by induction on $n$.

 If $n=4$, then $t=2$ and $f_{1,2}=u_1^{\omega}x_3$. If $\text{last}(g_2) = 4$, then $h$ is divisible by $x_4$,  therefore $h \in K$. If $\text{last}(g_2) < 4$, then  we set $J=(u_1^{\omega},u_2)$. Thus, $h \in (J^2 :f_{1,2})=(J:x_1^{\omega} x_2^{\omega-1})=(x_2,x_3)\subseteq K$.

In the following, we assume that  $n\ge 5$. If $n=t+2$, then $t\ge 3$, and $f_{1,t}=x_{3}\, u_1^\omega \, \prod\limits_{i=3}^{t}u_{i}$. If $\text{last}(g_t) = n$, then $h$ is divisible by $x_n$,  therefore $h \in K$. If $\text{last}(g_t) < n$, then
we set  $J=(u_1^{\omega},u_2,\ldots,u_{n-2})$. Thus
 $h \in (J^t :f_{1,t})=(J^{t-1}:f_{1,t-1})$. The conclusion follows from  the induction  hypothesis.

 If $n>t+2$, then $f_{1,t}=x_{3}\, u_1^\omega \, \prod\limits_{i=3}^{t}u_{i}$.     Using Lemma \ref{lem_colon_incl} and  the induction  hypothesis, we can conclude that $(I^t : f_{a,t}) \subseteq K$.

 (2) If $a\ge 2$, we will prove this inclusion relation by induction on $a$.

 (i) If $a=2$, then  based on the assumption, $2\le t\le n-3$, which implies that  $n\ge 5$.  We will prove this inclusion relation by induction on $n$.

 If $n=5$, then $t=2$
 and $f_{2,2}=u_2^{\omega}x_4$. If $\text{last}(g_2) = 5$, then $h$ is divisible by $x_5$, hence $h \in K$. If $\text{last}(g_2) < 5$, then  we set $J=(u_1,u_2^{\omega},u_3)$. Thus  $h \in (J^2 :f_{2,2})=(J:x_2^{\omega} x_3^{\omega-1})=(x_1,x_3,x_4)\subseteq K$.

In the following, we assume that $n\ge 6$. If $n=t+3$, then $t\ge 3$, and $f_{2,t}=x_{4}\, u_2^\omega \, \prod\limits_{i=4}^{t+1}u_{i}$. If $\text{last}(g_t) = n$, then $h$ is divisible by $x_n$, hence $h \in K$. If $\text{last}(g_t) < n$, then we set $J=(u_1,u_2^{\omega},\ldots,u_{n-2})$. Thus,   $h \in (J^t :f_{2,t})=(J^{t-1}:f_{2,t-1})$. The conclusion follows from the
induction hypothesis.

  If $n>t+3$, then $f_{2,t}=x_{4}\, u_2^\omega \, \prod\limits_{i=4}^{t+1}u_{i}$.    Using Lemma \ref{lem_colon_incl} and  the induction  hypothesis, we can derive that $(I^t : f_{a,t}) \subseteq K$.

  (ii) If $a\ge 3$, then by the fact that  $\supp( f_{a,t}) \cap \{1,2\} = \emptyset$, Lemma \ref{lem_colon_incl} and   induction  on $a$,   we can conclude that $(I^t : f_{a,t}) \subseteq K$.

The second statement follows immediately from the first, Lemma~\ref{sum1}, and Lemma~\ref{depth_path}.
\end{proof}

Similarly, we have

\begin{Lemma}\label{lem_w_1_2} Assume that $2\le t \le a-1$. Let  $g_{a,t} = x_{a-1} u_a^\omega (\prod\limits_{i=a-2}^{a-t+1}u_i)$. Then
$$I^t : g_{a,t} = I+ (x_{a+2})+(\bigcup\limits_{i=a-t}^{a}\{x_{i}\}).$$
 In particular,
$$\depth (S/I^t:g_{a,t}) = 1 + \lceil \frac{a-t-1}{3} \rceil + \lceil \frac{n-a-2}{3} \rceil.$$
\end{Lemma}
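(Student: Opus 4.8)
The plan is to prove Lemma~\ref{lem_w_1_2} by mirroring the argument used for Lemma~\ref{lem_w_1_1}, exploiting the symmetry of the path. The monomial $g_{a,t}$ is a \emph{reflected} analogue of $f_{a,t}$: instead of building a product of edges marching to the right of the weighted edge $u_a$, here $g_{a,t} = x_{a-1}\,u_a^\omega\,(\prod_{i=a-2}^{a-t+1}u_i)$ collects the edges $u_{a-t+1},\ldots,u_{a-2}$ stepping to the \emph{left}, together with the leftover factor $x_{a-1}$ and the full power $u_a^\omega$. So the whole proof of Lemma~\ref{lem_w_1_1} should transfer under the index reflection $x_j \mapsto x_{n+1-j}$, which sends the path to itself and moves the non-trivial weight from position $a$ to position $n-a$. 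The hypothesis $2\le t\le a-1$ is exactly the reflected analogue of $2\le t\le n-a-1$, confirming that the symmetry is the right organizing principle.

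First I would establish the containment ``$\supseteq$'' as in Lemma~\ref{lem_w_1_1}. Let $U,V$ be the bipartition of $\{x_{a-t},\ldots,x_a\}$ by parity of index and set $L = I + I(K_{U,V})$. The collection $\eb = \{u_{a-2},\ldots,u_{a-t+1}\}$ is connected, $G[\eb]$ is bipartite, and all weights in its closed neighborhood are trivial, so Lemma~\ref{lem_colon_product_edges} applies. Chaining colons gives
\[
I^t : g_{a,t} \supseteq (IL):(x_{a-1}\,u_a^\omega) \supseteq I + (L:x_{a-1}) + (L:x_{a+1}^{\omega-1}) + (I:x_a^\omega),
\]
and evaluating these colons against the complete bipartite structure and the single non-trivial generator $u_a^\omega$ produces the extra variables $x_{a-t},\ldots,x_a$ together with $x_{a+2}$ (the latter coming from $I:x_a^\omega$, since $u_a^\omega$ divides $(x_{a+1}x_{a+2})$-free products only through its neighbors). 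This yields the desired $I + (x_{a+2}) + (\bigcup_{i=a-t}^a\{x_i\})$ on the right.

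For the reverse containment I would again reduce, via Lemma~\ref{colon}, to a single minimal generator $h = \prod_{i=1}^t g_i / \gcd(\prod g_i, g_{a,t})$ with $\last(g_1)\le\cdots\le\last(g_t)$, and run a double induction. The natural strategy is to peel off the \emph{left} endpoint of the path rather than the right: one uses the reflected form of Lemma~\ref{lem_colon_incl} to strip the leaf $x_1$ (equivalently, to pass to the induced subgraph on $\{x_2,\ldots,x_n\}$), lowering $a$; and when the product of edges already exhausts the left segment one reduces $t$ to $t-1$ by the leaf-colon identity of Lemma~\ref{lem_colon_leaf}, exactly as in the base cases $n=t+2$, $n=t+3$ of the previous proof. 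Since the paper does not seem to have stated the reflection invariance as a standalone lemma, the main obstacle is bookkeeping: Lemmas~\ref{lem_colon_incl} and~\ref{lem_colon_product_edges} are phrased for stripping the \emph{right} leaf $x_n$, so I must either re-derive their mirror images or explicitly invoke the automorphism $x_j\mapsto x_{n+1-j}$ and check it fixes $I$ up to relabeling while carrying $g_{a,t}$ to a monomial of the $f$-type. Once the symmetry is pinned down, the casework on $\last(g_t)=n$ versus $\last(g_t)<n$ is routine.

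Finally, the depth formula follows immediately from the explicit description of $I^t:g_{a,t}$. The quotient $S/(I^t:g_{a,t})$ splits, by Lemma~\ref{sum1}, as a tensor product of an isolated factor (contributing $+1$) coming from the weighted vertex cluster, a path on the left of length giving $\lceil (a-t-1)/3\rceil$, and a path on the right of length giving $\lceil (n-a-2)/3\rceil$; applying Lemma~\ref{depth_path} to each path component yields $\depth(S/I^t:g_{a,t}) = 1 + \lceil (a-t-1)/3\rceil + \lceil (n-a-2)/3\rceil$, as claimed.
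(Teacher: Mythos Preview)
Your approach is exactly the one the paper intends: it proves Lemma~\ref{lem_w_1_2} by simply declaring that ``the proof is similar to that of Lemma~\ref{lem_w_1_1}'', i.e., by the reflection $x_j\mapsto x_{n+1-j}$ you describe. So the strategy is correct and matches the paper.

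There is, however, a concrete bookkeeping slip in the $\supseteq$ direction. Under the reflection, the paper's chain
\[
(IL):(x_{a+2}\,u_a^\omega)\;\supseteq\;I+(L:x_{a+2})+(L:x_{a+1}^{\omega-1})+(I:x_a^\omega)
\]
becomes, after re-centering at the new weight position,
\[
(IL):(x_{a-1}\,u_a^\omega)\;\supseteq\;I+(L:x_{a-1})+(L:x_{a}^{\,\omega-1})+(I:x_{a+1}^{\omega}),
\]
not the $(L:x_{a+1}^{\omega-1})+(I:x_a^\omega)$ you wrote. The distinction matters: with your indices, $(I:x_a^\omega)$ contains $x_{a+1}^\omega$, yet $x_{a+1}^\omega\cdot x_{a-1}u_a^\omega=x_{a-1}x_a^\omega x_{a+1}^{2\omega}$ is \emph{not} in $IL$ (no product of a generator of $I$ and one of $L$ divides it), so the displayed containment fails as written. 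Moreover, since $x_{a+1}\notin N[\eb]=\{x_{a-t},\dots,x_a\}$, your term $(L:x_{a+1}^{\omega-1})$ does not produce the variables of parity $a-1$ in $U\cup V$; these come instead from $(L:x_a^{\omega-1})$, while $x_{a+2}$ comes from $(I:x_{a+1}^\omega)$ via $u_{a+1}/x_{a+1}=x_{a+2}$. With this correction the rest of your outline (the reverse containment by peeling the left leaf via the mirrored Lemma~\ref{lem_colon_incl}, and the depth computation via Lemmas~\ref{sum1} and~\ref{depth_path}) goes through exactly as in Lemma~\ref{lem_w_1_1}.
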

\begin{proof}
    The proof is similar to that of Lemma \ref{lem_w_1_1}.
\end{proof}

When $3\le a \le \lfloor\frac{n}{2}\rfloor$ and $3\le t \le n - 3$, we also consider the following family.
Let $t-3=c+d$ be a partition of $t-3$ such that $0\le c \le a - 3$ and $0\le d \le n - a - 3$.
Let
\[
h_{c,d} \;=\; u_a^\omega \, x_{a-1} \, x_{a+2} \cdot
\bigl(\prod\limits_{i=a-c-1}^{a-2}u_i\bigr) \cdot
\bigl(\prod\limits_{i=a+2}^{a+d+1}u_i\bigr).
\]

\begin{Lemma}\label{lem_w_1_3}
With the above notation, we have
\[
I^t : h_{c,d}
\;=\;
I \;+\; (\bigcup\limits_{i=0}^{\lceil \tfrac{c}{2} \rceil}\{x_{a-2i-1}\})
\;+\; (\bigcup\limits_{i=0}^{\lceil \tfrac{d}{2} \rceil}\{x_{a+2i+2}\})
\;+\; I(K_{U,V}),
\]
where
\[
U \;=\; \bigcup\limits_{i=a-c-2}^{a}\{x_{i}\}
\setminus \bigcup\limits_{i=0}^{\lceil \tfrac{c}{2} \rceil}\{x_{a-2i-1}\}\quad\text{and}\quad V\;=\;\bigcup\limits_{i=a+1}^{a+d+3}\{x_{i}\}
\setminus\bigcup\limits_{i=0}^{\lceil \tfrac{d}{2} \rceil}\{x_{a+2i+2}\}
\]
In particular,
\[
\depth(S / (I^t : h_{c,d}))
\;\le\;
1 \;+\; \left\lceil \frac{a - c - 3}{3} \right\rceil
\;+\; \left\lceil \frac{n - (a + d + 3)}{3} \right\rceil.
\]
\end{Lemma}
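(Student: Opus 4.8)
The proof has two distinct parts: establishing the stated equality of monomial ideals $I^t:h_{c,d}=K$, and then reading off the depth bound from the combinatorial shape of $K$. I would organize everything around a convenient refactoring of $h_{c,d}$: its two trivially weighted arms $\prod_{i=a-c-1}^{a-2}u_i$ and $\prod_{i=a+2}^{a+d+1}u_i$ are treated as connected edge collections, the central factor $u_a^\omega$ is isolated, and the bare variables $x_{a-1},x_{a+2}$ are tracked separately. As in the proof of Lemma~\ref{lem_w_1_1}, I keep the convention $\last(g_1)\le\cdots\le\last(g_t)$ on any factorization into minimal generators.

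For the inclusion $K\subseteq I^t:h_{c,d}$, I would set $L=I+I(K_{U,V})$ and run the same colon chain as in Lemma~\ref{lem_w_1_1}. Applying Lemma~\ref{lem_colon_product_edges} separately to each arm (whose closed neighborhood carries trivial weight) produces the complete bipartite generators $I(K_{U,V})$; colon-ing the leftover factor $u_a^\omega x_{a-1}x_{a+2}$ against $L$ and against $I$, and exploiting the powers $x_a^\omega,\,x_{a+1}^{\omega-1}$ together with the bare variables, produces the linear generators $x_{a-2i-1}$ and $x_{a+2i+2}$. The alternating odd/even-offset pattern of these variables is exactly what is forced by which partial products of the arms fall back into $I^t$.

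The reverse inclusion $I^t:h_{c,d}\subseteq K$ is the crux. Here I would invoke Lemma~\ref{colon} to reduce to showing that every monomial $h=\prod_{i=1}^t g_i/\gcd(\prod_{i=1}^t g_i,h_{c,d})$ lies in $K$, and then induct on $n$, peeling the top vertex via Lemma~\ref{lem_colon_incl}: if $\last(g_t)=n$ then, since the non-trivial weight sits at $a\le\lfloor n/2\rfloor$, one has $g_t=x_{n-1}x_n$, and in the base range $x_n$ is one of the linear generators of $K$; otherwise one passes to the induced subgraph on $x_1,\dots,x_{n-1}$ and applies the inductive hypothesis. Simultaneously one inducts on the arm lengths $c,d$, so that consuming a boundary edge shortens an arm and exposes the next offset variable; the small cases ($c=0$, $d=0$, or minimal $n$) are settled by a direct colon against the localized ideal supported on $\supp(h_{c,d})$, mirroring the base cases of Lemma~\ref{lem_w_1_1}.

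For the depth estimate I would first kill the linear generators, obtaining $S/K\cong\bar S/I(\bar G)$; here $u_a^\omega$ is redundant, being a multiple of $x_ax_{a+1}\in I(K_{U,V})$, so $\bar G$ is a genuine simple graph: the complete bipartite core $K_{U,V}$ together with the two pendant paths on $x_1,\dots,x_{a-c-3}$ and on $x_{a+d+4},\dots,x_n$. The decisive point is that $\bar G$ is already its own bipartite completion with respect to the single central edge $\eb=\{x_ax_{a+1}\}$: since the core is complete bipartite and the endpoints $x_a,x_{a+1}$ are interior (their path-neighbors $x_{a-1},x_{a+2}$ were killed), one has $N[\eb]=U\cup V$ exactly, so $\bar G[\eb]=K_{U,V}$ is bipartite and $\bar G[\overline{\eb}]$ is the disjoint union of the two pendant paths, which is weakly chordal. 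Then Lemma~\ref{lem_depth_bipartite_completion}, combined with Lemma~\ref{sum1} and Lemma~\ref{depth_path} applied to the two paths, gives precisely $\depth(S/(I^t:h_{c,d}))\le 1+\lceil(a-c-3)/3\rceil+\lceil(n-(a+d+3))/3\rceil$. I expect the main obstacle to be the reverse inclusion of the third paragraph: the two arms meet at the weighted centre, and the induction must coordinate the shortening of an arm with the appearance of the next offset variable while matching the localized ideal in every base case.
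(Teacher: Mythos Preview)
Your overall architecture matches the paper's, and your treatment of the depth bound via Lemma~\ref{lem_depth_bipartite_completion} is correct. However, there are two genuine gaps in the ideal equality, one in each inclusion.

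For the containment $K\subseteq I^t:h_{c,d}$, applying Lemma~\ref{lem_colon_product_edges} to each arm separately cannot produce $I(K_{U,V})$: the left arm yields a bipartite completion on $\{x_{a-c-2},\ldots,x_a\}$ and the right arm one on $\{x_{a+1},\ldots,x_{a+d+3}\}$, but $U$ sits entirely on the left and $V$ entirely on the right, so every edge of $K_{U,V}$ is a \emph{cross-arm} edge that neither completion sees. The paper instead verifies these directly: for $2j\le c+2$ and $2k\le d+2$ one checks $x_{a-2j}\cdot x_{a-1}\prod_{i=a-c-1}^{a-2}u_i\in I^{c+1}$ and $x_{a+1+2k}\cdot x_{a+2}\prod_{i=a+2}^{a+d+1}u_i\in I^{d+1}$, whence $x_{a-2j}x_{a+1+2k}h_{c,d}\in I^{c+1}\cdot I^{d+1}\cdot I=I^t$. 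Your colon chain does recover the linear generators, but the $I(K_{U,V})$ part needs this extra step.

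For the reverse inclusion, the claim that ``in the base range $x_n$ is one of the linear generators of $K$'' is false in general. The right-hand linear generators are $x_{a+2},x_{a+4},\ldots,x_{a+2\lceil d/2\rceil+2}$; in the extremal case $d=n-a-3$ this top index equals $n$ only when $d$ is odd, and equals $n-1$ when $d$ is even. So when $d$ is even and $\last(g_t)=n$, the monomial $h$ is divisible by $x_n$ but $x_n\notin K$ and $x_{n-1}$ may have been absorbed by the $\gcd$ (since $x_{n-1}\in\supp(h_{c,d})$), so you cannot conclude $h\in I$ either. The paper handles exactly this by a further split: if also $\last(g_{t-1})=n$ then a second copy of $x_{n-1}$ survives and $h\in(x_{n-1})\subseteq K$; if $\last(g_{t-1})\le n-1$ then one factors $h=x_n\cdot h'$ with $h'\in J^{t-1}:x_{n-2}h_{c,d-1}$ (here $J$ drops the last edge), and the inductive hypothesis on $n$ places $x_n h'$ in $I(K_{U,V})$. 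Your sketch does not account for this parity obstruction, and the ``simultaneous induction on $c,d$'' you invoke is precisely where this case must be confronted.
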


\begin{proof} It is obvious that  $I^t : h_{c,d}\supseteq I$ from the expression of $h_{c,d}$.  Let  $U_1$ and $ V_1$ be the bipartition of $\{x_a, x_{a-1}, \ldots, x_{a-c-2}\}$ into vertices with odd and even indices. Let $U_2$ and $V_2$ be the bipartition of
 $\{x_{a+1}, x_{a+2}, \ldots, x_{a+d+3}\}$
 into vertices with odd and even indices. Let
\[
J \;=\; I \;+\; I(K_{U_1,V_1}) \;+\; I(K_{U_2,V_2}).
\]
By  Lemma \ref{lem_colon_product_edges} and the fact that $u_a^\omega x_{a-1} x_{a+2} \;=\; (x_{a-1}x_a)(x_{a+1}x_{a+2})(x_a x_{a+1})^{\omega-1}$,
we have that
\[
I^t : h_{c,d}
\;\supseteq\;
J : x_a^{\omega-1} x_{a+1}^{\omega-1} \supseteq
(\bigcup\limits_{i=0}^{\lceil \tfrac{c}{2} \rceil}\{x_{a-2i-1}\})
\;+\; (\bigcup\limits_{i=0}^{\lceil \tfrac{d}{2} \rceil}\{x_{a+2i+2}\}).
\]
Note that if  $2j \le c+2$, then  $x_{a - 2j} \in I^{c+1} : (x_{a-1} \prod\limits_{i=a-c-1}^{a-2}u_{i})$. Also note that if $2k \le d+2$, then $x_{a+1+2k} \in I^{d+1} : (x_{a+2} \prod\limits_{i=a+2}^{a+d+1}u_{i})$. Thus,
 $x_{a - 2j}x_{a+1+2k}h_{c,d}\subseteq I^{c+1}\cdot I^{d+1}\cdot I=I^t$ for all $2j \le c+2$ and all $2k \le d+2$. This implies that $I(K_{U,V})\subseteq (I^t:h_{c,d})$.
Therefore,   the left-hand side contains the right-hand side.

Now we prove the reverse inclusion
\[
(I^t : h_{c,d})\subseteq L :\;=\;
I \;+\; (\bigcup\limits_{i=0}^{\lceil \tfrac{c}{2} \rceil}\{x_{a-2i-1}\})
\;+\; (\bigcup\limits_{i=0}^{\lceil \tfrac{d}{2} \rceil}\{x_{a+2i+2}\})
\;+\; I(K_{U,V}).
\]
According to Lemma \ref{colon},  we can assume that
\[
g \;=\; \frac{\prod_{i=1}^{t}f_i}{\gcd(\prod_{i=1}^{t}f_i, h_{c,d})} \;\in\; I^t : h_{c,d},
\]
where $f_1, \ldots, f_t$ are minimal generators of $I$ such that $\last(f_1) \le \last(f_2) \le \cdots \le \last(f_t)$\textcolor{blue}.
From the assumption that  $3\le t \le n - 3$, we know that
$n\ge 6$. We will prove $(I^t : h_{c,d})\subseteq L$ by induction on $n$.

(1) If $n=6$, then  $t=3$, $a=3$ since  $3\le a \le \lfloor\frac{n}{2}\rfloor$. In this case,  $c=d=0$ and  $h_{c,d}=h_{0,0}=x_2u_3^{\omega}x_5$.  There
are four cases to consider:.
(i) If $\last(f_1)=2$ and $\last(f_3)=6$, then $g\in(x_1x_6)\subseteq L$.
(ii) If $\last(f_1)=2$ and $\last(f_3)\le 5$, then
\[
g \;=\; \frac{u_1 f_2 f_3}{\gcd(u_1 f_2 f_3, x_2u_3^{\omega}x_5)}=x_1\frac{f_2 f_3}{\gcd(f_2 f_3,u_3^{\omega}x_5)},
\]
where $f_2$ and $f_3$  are minimal generators of  the ideal $J_1:=(u_1,u_2,u_3^{\omega},u_4)$. By Lemma \ref{lem_colon_leaf},  $g\in x_1(J_1^2:u_3^{\omega}x_5)=x_1(J_1:x_3^{\omega}x_4^{\omega-1})=x_1(x_2,x_4,x_5)\subseteq L$.

(iii) If $\last(f_1)\ge 3$ and $\last(f_3)=6$, then
\[
g \;=\; \frac{f_1 f_2 u_5}{\gcd(f_1 f_2 u_5, x_2u_3^{\omega}x_5)}=x_6 \frac{f_1 f_2}{\gcd(f_1 f_2,x_2u_3^{\omega})},
\]
where  $f_1$ and $f_2$  are minimal generators of  the ideal $J_2:=(u_2,u_3^{\omega},u_4,u_5)$. Again using Lemma 2.5, we get  $g\in x_6(J_2^2:x_2u_3^{\omega})=x_6(J_2:x_3^{\omega-1}x_4^{\omega})=x_6(x_2,x_3,x_5)\subseteq L$.

(iv) If $\last(f_1)\ge 3$ and $\last(f_3)\le 5$. then we set $J_3=(u_2,u_3^{\omega},u_4)$. Again using  Lemma \ref{lem_colon_leaf}, we deduce that  $g\in(J_3^3:x_2u_3^{\omega}x_5)=(J_3:(x_3x_4)^{\omega-1})\subseteq(x_2,x_5,x_3x_4)\subseteq L$.

(2) In the following, we can assume that $n\ge 7$. If $0\le c< a-3$ or $0\le d< n-a-3$, then $\supp(h_{c,d})\cap \{1,2\}=\emptyset$ or $\supp(h_{c,d})\cap \{n-1,n\}=\emptyset$. By Lemma \ref{lem_colon_incl} and the induction on $n$, we deduce that $(I^t : h_{c,d}) \subseteq L$.
In the following, we assume that $c=a-3$ and $d=n-a-3$. Thus  $t=n-3$, since $t-3=c+d$.
In this case, $ h_{c,d}=x_{a-1}u_a^{\omega}x_{a+2}(\prod\limits_{i=2}^{a-2}u_i)(\prod\limits_{i=a+2}^{n-2}u_i)$. We divide it into two  cases:

($\alpha$) If $\last(f_t)\le n-1$, then we set $J_4=(u_1,\ldots,u_{a-1},u_a^{\omega},u_{a+1},\ldots,u_{n-2})$.
By Lemma \ref{lem_colon_leaf} and the induction on $n$, we have $g\in (J_4^t:h_{a-3,n-a-3})=(J_4^{t-1}:h_{a-3,n-a-4})\subseteq L$.

($\beta$) If $\last(f_t)=n$, then we can write $g$ as
\[
g \;=\;\frac{u_{n-1}\prod_{i=1}^{t-1}f_i}{\gcd(u_{n-1}\prod_{i=1}^{t-1}f_i, h_{c,d})}\;=\; x_n \frac{\prod_{i=1}^{t-1}f_i}{\gcd(f_1 \cdots f_{t-1}, x_{n-2}h_{c, d-1})}.
\]

If $d$ is odd, then $a+2\lceil \tfrac{d}{2} \rceil+2=a+(n-a-2)+2=n$. Thus $g\in(x_{n})\subseteq (\bigcup\limits_{i=0}^{\lceil \tfrac{d}{2} \rceil}\{x_{a+2i+2}\})\subseteq L$.
If $d$ is even and  $\last(f_{t-1}) = n$, then $a+2\lceil \tfrac{d}{2} \rceil+2=a+(n-a-3)+2=n-1$. Thus $g\in(x_{n-1})\subseteq (\bigcup\limits_{i=0}^{\lceil \tfrac{d}{2} \rceil}\{x_{a+2i+2}\})\subseteq L$.
If $d$ is even and  $\last(f_{t-1})\le  n-1$, then  $g\in x_{n}(J_4^{t-1}:x_{n-2}h_{c,d-1})\subseteq I(K_{U,V})$ by induction on $n$.

We complete the proof of the first statement and the  second statement follows from the first part, Lemma~\ref{sum1}, Lemma~\ref{depth_path} and  Lemma~\ref{lem_depth_bipartite_completion}.
\end{proof}

\begin{proof}[Proof of Theorem~\ref{thm_1}]
It is sufficient to prove the claimed upper bound for the depth by Lemma~\ref{lower_bound_1}. In the following, we assume that  $I = I(P(\ww))$, $f_{a,t} \;=\; x_{a+2}\, u_a^\omega \, (\prod\limits_{i=a+2}^{t+a-1}u_i)$,  $g_{a,t} = x_{a-1} u_a^\omega (\prod\limits_{i=a-2}^{a-t+1}u_i)$ and $h_{c,d} \;=\; u_a^\omega \, x_{a-1} \, x_{a+2}
\bigl(\prod\limits_{i=a-c-1}^{a-2}u_i\bigr)
\bigl(\prod\limits_{i=a+2}^{a+d+1}u_i\bigr)$. There
are two cases to consider:

\medskip

\noindent\textbf{Case 1. } If  $1\le a \le 2$, then  for $t \le n-a-1$, we can derive by Lemma~\ref{lem_w_1_1} that
\[
\depth(S/I^t) \le \depth(S/I^t : f_{a,t}) =
\begin{cases}
1 + \lceil \dfrac{n - t - 2}{3} \rceil
= \lceil \dfrac{n - t + 1}{3} \rceil, & \text{if } a = 1, \\
1 + \lceil \dfrac{n - t - 3}{3} \rceil
= \lceil \dfrac{n - t}{3} \rceil, & \text{if } a = 2.
\end{cases}
\]
When $t > n-a-1$,  we deduce that $\depth (S/I^t) \le \depth (S/I^{n-a-1}) = 1$ by Corollary \ref{cor_dec}.

\medskip

\noindent\textbf{Case 2.}  If  $a \ge 3$. When  $t = 2$, we can deduce that
\begin{align*}
\depth(S/I^t)
& \le \min\{\depth(S/I^t : f_{a,t}),\, \depth(S/I^t : g_{a,t})\} \\
&= \min \{ \lceil \frac{a-2}{3} \rceil + \lceil \frac{n-a}{3} \rceil, \lceil \frac{a}{3} \rceil + \lceil \frac{n-a-2}{3} \rceil \}.
\end{align*}
which is equal to $\lceil \frac{n-2}{3} \rceil$ unless $a = 1 \pmod 3$ and $n = 2 \pmod 3$, which in this case is equal to $\lceil \frac{n-1}{3} \rceil$.

In the following, we can write $a$ as $a=3\ell+r$ with $r\in\{0,1,2\}$.

	If $t=3$ and $r=2$, then by Lemma \ref{lem_w_1_1}, we have
	\[
	\depth (S/I^t) \le \depth(S/I^t : f_{a,t}) =\lceil \frac{n -3}{3} \rceil.
	\]

	If $t=3$ and $r=1$, or $t=4$ and $r=2$, then by Lemma \ref{lem_w_1_2}, we have
\[
	\depth (S/I^t) \le \depth(S/I^t : g_{a,t}) =\lceil \frac{n -t}{3} \rceil.
\]

If $t=3$ and $r=0$, or $t=4$ and $r\in\{0,1\}$, or $5 \le t \le n-3$, then we choose  $c = r$ and $d = t-3-r$. Thus
by Lemma \ref{lem_w_1_3}, we have
\[
\depth (S/I^t) \le \depth(S/I^t : h_{c,d}) \le\lceil \frac{n -t}{3} \rceil.
\]

If $t > n - 3$, then by Corollary \ref{cor_dec}, we deduce that $\depth (S/I^t) \le \depth (S/I^{n-3}) = 1$. This completes the proof.
\end{proof}

   \section{The case of weight sequences with two non-trivial weights}\label{sec:path2}
In this section, we assume that
\[
\ww = (1, \ldots, 1, \gamma, 1, \delta, 1, \ldots, 1)
\]
is a weight sequence with exactly two non-trivial weights at positions $a$ and $a+2$,
where $1 \le a \le \lfloor \frac{n}{2} \rfloor- 1$.
We will consider two families of potential monomials to compute the depth of colon ideals of the form
 $I(P(\ww))^t : f$.

\begin{Lemma}\label{lem_w_2_1}
Assume that $2\le t \le n - a - 4$. Let $f_{a,t}\;=\; x_a^{\gamma - 1} x_{a+1}^\gamma (\prod\limits_{i=a+4}^{a+t+2}u_i)$.
Then
\[
I(P(\ww))^t : f_{a,t}
\;=\; I(P(\ww)) \;+\; (x_{\max\{1, a-1\}},\, x_a,\, x_{a+2}) \;+\; I(K_{U,V}),
\]
where
\[
U \;=\; \bigcup\limits_{i=1}^{\left\lfloor \frac{t+3}{2} \right\rfloor}\{x_{a+2i+1}\},
\quad\text{and}\quad
V \;=\; \bigcup\limits_{i=a+3}^{a+t+4}\{x_{i}\} \setminus U.
\]

In particular,
\[
\depth\!\left(S / (I^t : f_{a,t})\right)
\;\le \;
\left\lceil \frac{a-2}{3} \right\rceil
\;+\; 2
\;+\; \left\lceil \frac{n - (a+t+4)}{3} \right\rceil.
\]
\end{Lemma}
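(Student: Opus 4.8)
The plan is to prove the stated colon identity $I^t : f_{a,t} = L$, where $L$ denotes the claimed right-hand side, and then read the depth bound off a tensor decomposition of $S/L$ obtained after modding out the three linear forms. The argument follows the template already set by Lemma~\ref{lem_w_1_1} and Lemma~\ref{lem_w_1_3}: an explicit verification for the inclusion $\supseteq$, an induction on $n$ for $\subseteq$, and a combination of Lemma~\ref{sum1}, Lemma~\ref{depth_path} and Lemma~\ref{lem_depth_bipartite_completion} for the depth.

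For the inclusion $L \subseteq I^t : f_{a,t}$, I would check the generators one at a time. Since $\prod_{i=a+4}^{a+t+2} u_i$ is a product of $t-1$ trivially weighted edges, we have $f_{a,t}\in I^{t-1}$, giving $I \subseteq I^t : f_{a,t}$. For the linear forms the key computations are $x_a f_{a,t} = u_a^\gamma \prod_{i=a+4}^{a+t+2} u_i$, then $x_{a+2} f_{a,t} = u_a^{\gamma-1} u_{a+1} \prod_{i=a+4}^{a+t+2} u_i$, and (when $a\ge 2$) $x_{a-1} f_{a,t} = u_{a-1}\, x_a^{\gamma-2} x_{a+1}^\gamma \prod_{i=a+4}^{a+t+2} u_i$; each is a product of $t$ generators of $I$ times a monomial, hence lies in $I^t$ (note $\gamma\ge 2$ makes $x_a^{\gamma-2}$ a genuine monomial). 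For the edges of $K_{U,V}$ I would invoke Lemma~\ref{lem_colon_product_edges} together with a reachability argument in the spirit of Lemma~\ref{lem_w_1_3}: hopping along the trivially weighted subpath $x_{a+3}, x_{a+4}, \ldots, x_{a+t+4}$ shows that $x_p x_q f_{a,t}\in I^t$ whenever $p$ and $q$ have opposite parity, which is exactly the condition defining the edges of $K_{U,V}$.

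For the reverse inclusion I would reduce, via Lemma~\ref{colon}, to a candidate generator $h = \prod_{i=1}^{t} g_i / \gcd(\prod_{i=1}^{t} g_i, f_{a,t})$ with $\last(g_1)\le\cdots\le\last(g_t)$, and induct on $n$. When $\last(f_{a,t}) < n-1$ the support of $f_{a,t}$ misses $\{n-1,n\}$, so Lemma~\ref{lem_colon_incl} passes to the induced subgraph on $V(G)\setminus\{x_n\}$ and the inductive hypothesis applies; at the boundary $t = n-a-4$ I would split on whether $\last(g_t)=n$, using Lemma~\ref{lem_colon_leaf} to drop $t$ by one in the first case and a parity analysis (mirroring the even/odd dichotomy for $d$ in Lemma~\ref{lem_w_1_3}) to place $h$ into the bipartite part or into one of the linear forms in the second, with the small values of $n$ handled as separate base cases.

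Finally, for the depth I would mod out the linear forms $x_{\max\{1,a-1\}}, x_a, x_{a+2}$. This kills the edges $u_{a-2},\ldots,u_{a+2}$ and disconnects $S/L$ into three pieces: the path $P_{a-2}$ on $x_1,\ldots,x_{a-2}$ (depth $\lceil (a-2)/3\rceil$, vacuous for $a\le 2$), the isolated vertex $x_{a+1}$ (depth $1$), and the block on $x_{a+3},\ldots,x_n$ carrying $K_{U,V}$ together with the tail path. This last block is precisely the bipartite completion $\widetilde{G[\mathbf{e}]}$ with $G$ the path on $x_{a+3},\ldots,x_n$ and $\mathbf{e}=\{u_{a+4},\ldots,u_{a+t+2}\}$, so that $N[\mathbf{e}]=\{x_{a+3},\ldots,x_{a+t+4}\}$ and $G[\overline{\mathbf{e}}]$ is the path on $x_{a+t+5},\ldots,x_n$; Lemma~\ref{lem_depth_bipartite_completion} then bounds its depth by $1+\lceil (n-a-t-4)/3\rceil$. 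Combining the three pieces through Lemma~\ref{sum1} and Lemma~\ref{depth_path} yields the total $\lceil (a-2)/3\rceil + 1 + 1 + \lceil (n-(a+t+4))/3\rceil$, which is the claimed bound. I expect the reverse inclusion to be the main obstacle: the induction must track the parity-dependent index set defining $U$ and $V$, correctly absorb every monomial produced by the colon operation into either $I(K_{U,V})$ or a linear form, and dispose of several small-$n$ base cases by hand.
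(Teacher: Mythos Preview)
Your plan is correct, but the paper proves the colon identity in one stroke, without any induction on $n$. The key observation you are missing is the associativity of colon together with the full strength of Lemma~\ref{lem_colon_product_edges}. Writing $f_{a,t} = (x_a^{\gamma-1}x_{a+1}^{\gamma})\cdot\prod_{i=a+4}^{a+t+2}u_i$, the second factor is exactly a product of $t-1$ trivially weighted edges whose closed neighbourhood $\{x_{a+3},\ldots,x_{a+t+4}\}$ contains no edge of non-trivial weight (the weighted edges $u_a$ and $u_{a+2}$ lie outside). Hence Lemma~\ref{lem_colon_product_edges} gives the \emph{equality}
\[
I^t : \textstyle\prod_{i=a+4}^{a+t+2} u_i \;=\; I \;+\; I(K_{U,V}),
\]
and therefore $I^t : f_{a,t} = \bigl(I + I(K_{U,V})\bigr) : (x_a^{\gamma-1}x_{a+1}^{\gamma})$. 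The remaining colon is a one-line computation using Lemma~\ref{colon}: among the generators only $u_{a-1}$, $u_a^{\gamma}$, $u_{a+1}$ share support with $x_a^{\gamma-1}x_{a+1}^{\gamma}$, and they contribute $x_{\max\{1,a-1\}}$, $x_a$, $x_{a+2}$ respectively. So both inclusions come for free, and the whole inductive machinery you set up for $\subseteq$ (Lemma~\ref{lem_colon_incl}, case splits on $\last(g_t)$, parity analysis, small-$n$ base cases) is unnecessary here. Your approach would still work, of course---it simply re-proves by hand what Lemma~\ref{lem_colon_product_edges} already packages---but it is considerably longer.

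Your depth argument, on the other hand, is exactly what is needed and is more explicit than the paper, which leaves that part implicit after the colon identity: modding out the three linear forms and invoking Lemmas~\ref{sum1}, \ref{depth_path} and~\ref{lem_depth_bipartite_completion} on the resulting path--$K_{U,V}$--path decomposition is the right way to obtain the stated bound.
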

\begin{proof}
Note that $\prod\limits_{i=a+4}^{a+t+2}u_i \;\in\; I(P(\ww))^{t-1}$.
It follows from Lemma~\ref{lem_colon_product_edges}  that
 \begin{align*}
(I(P(\ww))^t : f_{a,t})&=(I(P(\ww)) + I(K_{U,V})) : (x_a^{\gamma-1} x_{a+1}^\gamma)\\
&=I(P(\ww))\;+\; (x_{\max\{1, a-1\}},\, x_a,\, x_{a+2}) \;+\; I(K_{U,V}).
\end{align*}
\end{proof}

\begin{Lemma}\label{lem_w_2_2} Assume that $3\le a  \le \lfloor \frac{n}{2} \rfloor- 1$ and $2\le t \le n - 6$.
Let $t-2=c+d$ be a partition of $t-2$ such that $0\le c \le a - 3$ and $0\le d \le n - a - 5$.
 Let $h_{c,d} = x_{a-1} u_a^{\gamma}\cdot \bigl(\prod\limits_{i=a-c-1}^{a-2}u_i\bigr) \cdot
	\bigl(\prod\limits_{i=a+4}^{a+d+3}u_i\bigr)$. Then
\[
	(I(P ({\bf w}))^t:h_{c,d})= I  +(\bigcup\limits_{i=a-c-2}^{a}\{x_{i}\})+(x_{a+2}) + I(K_{U,V}),
	 \]
where $U$ and $V$ are the partition of the set $\{x_{a+3},x_{a+4},\ldots,x_{a+d+5}\}$ into odd and even indices. In particular,
$$\depth (S/I^t : h_{c,d}) \le \lceil \frac{a-c-3}{3} \rceil + 2 + \lceil \frac{n - (a+d+5)}{3} \rceil.$$
    \end{Lemma}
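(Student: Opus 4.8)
The plan is to prove the displayed equality of ideals first and then deduce the depth bound from it, exactly as in the proofs of Lemmas~\ref{lem_w_1_3} and \ref{lem_w_2_1}. Abbreviate the claimed right-hand side by
\[
L \;=\; I \;+\; \Bigl(\bigcup_{i=a-c-2}^{a}\{x_i\}\Bigr) \;+\; (x_{a+2}) \;+\; I(K_{U,V}),
\]
and prove the two inclusions $L \subseteq I^t:h_{c,d}$ and $I^t:h_{c,d}\subseteq L$ in turn, handling the depth estimate at the end.

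For $L \subseteq I^t:h_{c,d}$ I treat the three kinds of generators separately. Since $h_{c,d}=x_{a-1}\,u_a^\gamma\,(\prod_{i=a-c-1}^{a-2}u_i)(\prod_{i=a+4}^{a+d+3}u_i)$ is $x_{a-1}$ times a product of $1+c+d=t-1$ generators of $I$, we have $h_{c,d}\in I^{t-1}$ and hence $I\subseteq I^t:h_{c,d}$. For the bipartite part, the right path $\eb_R=\{u_{a+4},\dots,u_{a+d+3}\}$ is connected, $G[\eb_R]$ is bipartite, and its closed neighbourhood $\{x_{a+3},\dots,x_{a+d+5}\}$ carries only trivial weights (the weight $\delta$ sits at the position $a+2$, just outside). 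Thus Lemma~\ref{lem_colon_product_edges} applies and gives $I(K_{U,V})\subseteq I(\widetilde{G[\eb_R]})=I^{d+1}:(\prod_{i=a+4}^{a+d+3}u_i)$, so that $I(K_{U,V})\prod_{i=a+4}^{a+d+3}u_i\subseteq I^{d+1}$; multiplying by $x_{a-1}u_a^\gamma\prod_{i=a-c-1}^{a-2}u_i\in I^{c+1}$ yields $I(K_{U,V})\,h_{c,d}\subseteq I^{c+d+2}=I^t$. For the linear forms I verify $x_i\,h_{c,d}\in I^t$ by exhibiting a product of $t$ generators dividing $x_i\,h_{c,d}$. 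The decisive point is that $u_a^\gamma=(x_ax_{a+1})^\gamma$ supplies a reservoir of $x_a$'s and $x_{a+1}$'s, so a trivial edge such as $u_{a-1}$ or $u_{a+1}$ may be taken with multiplicity up to $\gamma$ while the product still divides $h_{c,d}$; this is precisely what forces the entire block $x_{a-c-2},\dots,x_a$ of \emph{both} index parities to come out as linear forms, in contrast with the purely bipartite behaviour on the right. Concretely, even-offset vertices are handled by the path identity already used in Lemma~\ref{lem_w_1_3}, while odd-offset vertices and $x_{a+2}$ are obtained by first pairing $x_{a-1}$ (respectively $x_{a+1}$) with a reservoir variable to create $u_{a-1}$ (respectively $u_{a+1}$) and reduce to an even-offset cover.

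The reverse inclusion $I^t:h_{c,d}\subseteq L$ is the technical heart and proceeds by induction on $n$, in the manner of Lemma~\ref{lem_w_1_3}. By Lemma~\ref{colon} it suffices to show that every minimal generator $g=(\prod_{i=1}^{t}f_i)/\gcd(\prod_{i=1}^{t}f_i,h_{c,d})$, where $f_1,\dots,f_t$ are minimal generators of $I$ with $\last(f_1)\le\cdots\le\last(f_t)$, lies in $L$. If $\supp(h_{c,d})\cap\{1,2\}=\emptyset$ (that is $c<a-3$) or $\supp(h_{c,d})\cap\{n-1,n\}=\emptyset$ (that is $d<n-a-5$), I peel off the corresponding leaf by Lemma~\ref{lem_colon_incl} and apply the induction hypothesis to the shorter path. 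This reduces to the boundary case $c=a-3$, $d=n-a-5$, where $t=n-6$ and $\supp(h_{c,d})$ meets both ends; here I split on whether $\last(f_t)=n$. If $\last(f_t)<n$, Lemma~\ref{lem_colon_leaf} strips the trivial leaf edge $u_{n-1}$ and lowers $t$ by one; if $\last(f_t)=n$, a parity analysis in $d$ (as in cases $(\alpha)$ and $(\beta)$ of Lemma~\ref{lem_w_1_3}) places $g$ into one of the linear pieces or into $I(K_{U,V})$. The base case is the smallest admissible datum $n=8$, $a=3$, $t=2$, $c=d=0$, with $h_{0,0}=x_2u_3^\gamma$, checked directly. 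The genuinely new point compared with Lemma~\ref{lem_w_1_3} is the non-trivial edge $u_{a+2}^\delta$; because it sits at the central vertices $x_{a+2},x_{a+3}$ and never becomes a leaf, it does not obstruct the peeling, and it only needs attention when some $f_i=u_{a+2}^\delta$, in which case $\supp(f_i)\cap\supp(h_{c,d})=\emptyset$ already forces $g\in I$. I expect this boundary case, together with the bookkeeping of which $f_i$ can be divisible by $x_{a+2}$ or $x_{a+3}$, to be the main obstacle.

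For the depth bound I read off the estimate from the formula. Passing to $S/(I^t:h_{c,d})$ kills the variables $x_{a-c-2},\dots,x_a,x_{a+2}$, and the surviving generators split over three mutually disjoint sets of variables: the path $u_1,\dots,u_{a-c-4}$ on $\{x_1,\dots,x_{a-c-3}\}$; the single isolated variable $x_{a+1}$ (whose two neighbours $x_a,x_{a+2}$ have been killed); and, on $\{x_{a+3},\dots,x_n\}$, the ideal obtained from the trivial path $u_{a+3},\dots,u_{n-1}$ together with $I(K_{U,V})$, which is the bipartite completion of that path along $\eb_R$. By Lemma~\ref{sum1} the depth is the sum of the three contributions. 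Lemma~\ref{depth_path} gives $\lceil(a-c-3)/3\rceil$ for the first, the isolated variable contributes $1$, and Lemma~\ref{lem_depth_bipartite_completion} bounds the third by $1+\lceil(n-(a+d+5))/3\rceil$, since the complementary induced subgraph is the path on $\{x_{a+d+6},\dots,x_n\}$. Summing yields
\[
\depth\bigl(S/(I^t:h_{c,d})\bigr)\;\le\;\Bigl\lceil\tfrac{a-c-3}{3}\Bigr\rceil+2+\Bigl\lceil\tfrac{n-(a+d+5)}{3}\Bigr\rceil,
\]
which is the asserted bound.
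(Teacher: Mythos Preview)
Your plan is correct and is exactly what the paper intends: its proof of this lemma is literally ``similar to that of Lemma~\ref{lem_w_1_3} and we omit,'' and you have reconstructed that scheme faithfully, including the correct identification of the one new ingredient (any $f_i=u_{a+2}^\delta$ has support disjoint from $h_{c,d}$, so it forces $g\in I$ immediately) and the correct depth computation via Lemmas~\ref{sum1}, \ref{depth_path}, and~\ref{lem_depth_bipartite_completion}.

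One caution for when you fill in the details: your appeal to ``a parity analysis in $d$ as in cases $(\alpha)$ and $(\beta)$ of Lemma~\ref{lem_w_1_3}'' does not transfer verbatim. In Lemma~\ref{lem_w_1_3} the right-hand side of the colon contains the individual linear forms $x_{a+2i+2}$, so case $(\beta)$ there lands $g$ in $(x_n)$ or $(x_{n-1})$. Here the right block contributes only $I(K_{U,V})$, with no solitary variables, so when $\last(f_t)=n$ you must instead argue that $g$ is divisible by some $x_nx_k$ with $k$ of the opposite parity (or by a generator of $I$); for instance, if $\last(f_{t-1})=n$ as well you get $x_{n-1}x_n\mid g$ and hence $g\in I(K_{U,V})$, while if $\last(f_{t-1})<n$ the induction hypothesis applied to $J_4^{t-1}:x_{n-2}h_{c,d-1}$ supplies the partner variable. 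Also note that the hypothesis $a\le\lfloor n/2\rfloor-1$ is only a symmetry normalization and need not be preserved under the induction on $n$; you should drop it in the inductive statement and require only $3\le a\le n-5$.
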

\begin{proof} The proof is similar to that of Lemma \ref{lem_w_1_3} and we omit.
\end{proof}

\begin{Lemma}\label{t=2}
		Assume that $3\le a  \le \lfloor \frac{n}{2} \rfloor- 1$. Then
		 \[
		(I(P ({\bf w}))^2:u_2u_{a+2}))=(x_{a+1},(x_{a+2}x_{a+3})^{\delta-1},x_{a+4})+(\bigcup\limits_{p=0}^{1}\bigcup\limits_{q=1}^{2}\{x_{2p+1}x_{2q}\})+I(P ({\bf w})).
		\]
		In particular,
		\[
		\depth(S / (I(P ({\bf w}))^2 : u_2u_{a+2}))
		\;\le\;
		2 \;+\; \left\lceil \frac{a - 4}{3} \right\rceil
		\;+\; \left\lceil \frac{n - (a+4)}{3} \right\rceil.
		\]
\end{Lemma}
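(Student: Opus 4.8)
The plan is to prove the two inclusions explicitly and then extract the depth bound from a block decomposition. Throughout write $v = u_2 u_{a+2} = x_2 x_3 x_{a+2} x_{a+3}$; since $a \ge 3$ its support is $\{2,3,a+2,a+3\}$ with all exponents equal to $1$, and the two ``clusters'' $\{2,3\}$ and $\{a+2,a+3\}$ are non-adjacent.

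For the inclusion $\supseteq$, I would first note $I \subseteq I^2 : v$, since for $g \in G(I)$ we have $g v = (g u_2)\,u_{a+2} \in I^2 \cdot u_{a+2} \subseteq I^2$, using $u_2 \in I$. Next, a direct application of Lemma~\ref{colon} gives $I : u_{a+2} = I + (x_{a+1},\,(x_{a+2}x_{a+3})^{\delta-1},\,x_{a+4})$; and since $u_2 \in I$, any $f \in I : u_{a+2}$ satisfies $f v = (f u_{a+2})\,u_2 \in I \cdot u_2 \subseteq I^2$, so $I : u_{a+2} \subseteq I^2 : v$. This accounts for the summand $(x_{a+1},(x_{a+2}x_{a+3})^{\delta-1},x_{a+4})$. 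Finally, for the generators of $I(K_{U,V})$ with $U = \{x_1,x_3\}$ and $V = \{x_2,x_4\}$, the edges $x_1x_2 = u_1$ and $x_2x_3 = u_2$ lie in $I$, while the diagonal $x_1x_4$ satisfies $x_1x_4 \cdot u_2 = u_1 u_3 \in I^2$ when $a \ge 4$; when $a = 3$ one instead groups $x_1 x_4\, v = (x_1x_2)(x_4x_5)(x_3x_6) = u_1 u_{a+1}(x_3 x_6) \in I^2$, and the same regrouping handles $x_3 x_4$ in the boundary case $a=3$ where $u_3 = u_a^{\gamma}$ is no longer in $I$. This gives $L \subseteq I^2 : v$.

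For the reverse inclusion I would invoke Lemma~\ref{colon}: it suffices to show $m := w/\gcd(w,v) \in L$ for every $w = g_1 g_2$ with $g_1, g_2 \in G(I)$. Since $\gcd(w,v)$ is supported on $\{2,3,a+2,a+3\}$, if some $g_i$ has support disjoint from this set then $g_i \mid m$, whence $m \in I \subseteq L$. Otherwise both $g_1,g_2$ meet $\{2,3,a+2,a+3\}$; as $a \ge 3$, each generator (supported on two consecutive indices, or on $\{a,a+1\}$, resp. $\{a+2,a+3\}$, for the weighted ones) meets exactly one cluster, with those meeting $\{2,3\}$ lying in $\{u_1,u_2,u_3\}$ (and $u_3 = u_a^{\gamma}$ precisely when $a=3$) and those meeting $\{a+2,a+3\}$ lying in $\{u_{a+1}, u_{a+2}^{\delta}, u_{a+3}\}$. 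I would then run the finite case analysis on which cluster each $g_i$ meets. When a factor equal to $u_{a+1}$, $u_{a+2}^{\delta}$, or $u_{a+3}$ occurs, cancelling the single available power of $x_{a+2}x_{a+3}$ leaves a multiple of $x_{a+1}$, $(x_{a+2}x_{a+3})^{\delta-1}$, or $x_{a+4}$; when both factors meet $\{2,3\}$, the quotient $m$ is a multiple of one of the four edges $x_1x_2,x_2x_3,x_3x_4,x_1x_4$ of $K_{U,V}$; and all remaining sub-cases leave $m$ divisible by a generator of $I$. Hence $m \in L$ in every case.

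For the depth estimate, note $x_{a+1}, x_{a+4} \in L$, so $\depth(S/L) = \depth(\overline{S}/\overline{L})$, where $\overline{S}$ omits these two variables. Setting $x_{a+1}=x_{a+4}=0$ kills the generators $u_a^{\gamma}, u_{a+1}, u_{a+3}, u_{a+4}$, so $\overline{L}$ decouples over three disjoint variable sets: the block on $x_1,\dots,x_a$ carrying the edge ideal of $G_1 := P_a$ together with the extra edge $\{x_1,x_4\}$; the block $\KK[x_{a+2},x_{a+3}]$ modulo the principal ideal $((x_{a+2}x_{a+3})^{\delta-1})$, of depth $1$; and the block on $x_{a+5},\dots,x_n$ carrying $I(P_{n-a-4})$, of depth $\lceil (n-a-4)/3\rceil$ by Lemma~\ref{depth_path}. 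By Lemma~\ref{sum1} the depth of $\overline{S}/\overline{L}$ is the sum of the three block depths (a block reduced to free variables simply contributes its number of variables). Finally $G_1$ is the bipartite completion $\widetilde{P_a[\mathbf{e}]}$ for $\mathbf{e} = \{u_2\}$: here $N[\mathbf{e}] = \{1,2,3,4\}$, the graph $G_1[\overline{\mathbf{e}}]$ is the path on $x_5,\dots,x_a$, and $\mathbf{e}$ is connected, so Lemma~\ref{lem_depth_bipartite_completion} gives $\depth \le 1 + \lceil (a-4)/3\rceil$ for the first block. Summing the three contributions yields the claimed bound $2 + \lceil (a-4)/3\rceil + \lceil (n-(a+4))/3\rceil$.

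I expect the main obstacle to be the reverse-inclusion case analysis: tracking the weighted generators $u_a^{\gamma}$ (which collapses into the left cluster exactly when $a=3$) and $u_{a+2}^{\delta}$, and verifying that the surviving exponent $\delta-1$ and the new diagonal edge $x_1x_4$ are captured by precisely the listed generators of $L$ and not by any extra ones.
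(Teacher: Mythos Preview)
Your argument is correct. The two inclusions are verified cleanly, the case analysis for the reverse inclusion is exhaustive (including the delicate point that for $a=3$ the generator at position $3$ is $u_3^{\gamma}$ rather than $u_3$), and the block decomposition for the depth bound is sound. One small cosmetic point: when $a=3$ the variable $x_4=x_{a+1}$ is killed, so the first block is literally $I(P_3)$ on $x_1,x_2,x_3$ with no extra edge; your description ``$P_a$ together with $\{x_1,x_4\}$'' is only literally accurate for $a\ge 4$, but the bound $1+\lceil(a-4)/3\rceil$ you extract is still $1$ in this case, so the conclusion is unaffected.

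Your route differs from the paper's. The paper does not run the generator-by-generator case analysis; instead it invokes Lemma~\ref{lem_colon_product_edges} with $t=2$ and $\mathbf e=\{u_2\}$ to identify $I^2:u_2$ with $I+I(K_{\{x_1,x_3\},\{x_2,x_4\}})$ in one stroke, and then takes a further colon by $u_{a+2}$ to pick up $(x_{a+1},(x_{a+2}x_{a+3})^{\delta-1},x_{a+4})$. That is shorter, but the hypothesis of Lemma~\ref{lem_colon_product_edges} (trivial weights on the closed neighbourhood of $u_2$) fails when $a=3$, and indeed $x_3x_4\notin I^2:u_2$ in that case, so the paper's one-line reduction needs a separate check there. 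Your direct verification sidesteps this issue entirely: by working with the full product $u_2u_{a+2}$ from the start you can use the extra factor $u_{a+2}$ in the regrouping $x_3x_4\cdot v=u_2u_{a+1}\cdot(x_3x_{a+3})$, which is exactly the manoeuvre unavailable when colon-ing by $u_2$ alone. The depth estimate in both approaches ultimately rests on the same block decomposition and on Lemma~\ref{lem_depth_bipartite_completion}; the paper simply omits those details.
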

\begin{proof}
	Let $I = I(P(\ww))$ and $g=u_2u_{a+2}$. Then by Lemma~\ref{lem_colon_product_edges}, we deduce that
	\begin{align*}
		(I^2 : g)&=(I + (\bigcup\limits_{p=0}^{1}\bigcup\limits_{q=1}^{2}\{x_{2p+1}x_{2q}\})) : (x_a^{\gamma-1} x_{a+1}^\gamma)\\
		&=(x_{a+1},(x_{a+2}x_{a+3})^{\delta-1},x_{a+4})+(\bigcup\limits_{p=0}^{1}\bigcup\limits_{q=1}^{2}\{x_{2p+1}x_{2q}\})+I.
	\end{align*}
\end{proof}

\begin{proof}[Proof of Theorem \ref{thm_2}] It is sufficient to prove the claimed upper bound for the depth by Lemma~\ref{lower_bound_2}. In the following, we assume that   $I = I(P(\ww))$, $f_{a,t}\;=\; x_a^{\gamma - 1} x_{a+1}^\gamma (\prod\limits_{i=a+4}^{a+t+2}u_i)$, $g=u_2u_{a+2}$ and $h_{c,d} = x_{a-1} u_a^{\gamma}\cdot \bigl(\prod\limits_{i=a-c-1}^{a-2}u_i\bigr) \cdot \bigl(\prod\limits_{i=a+4}^{a+d+3}u_i\bigr)$. There
are two cases to consider:

\medskip

\noindent\textbf{Case 1.} If  $a \le 2$, then
for $2 \le t \le n-a-4$, we can derive by Lemma~\ref{lem_w_2_1} that
\[
\depth(S/I^t) \le \depth(S/(I^t : f_{a,t})) = 2 + \lceil \frac{n - (a+t+4)}{3} \rceil
= \begin{cases}
\lceil \dfrac{n - t + 1}{3} \rceil, & \text{if } a = 1, \\
\lceil \dfrac{n - t}{3} \rceil, & \text{if } a = 2.
\end{cases}
\]
When $t > n-a-4$, then by Corollary \ref{cor_dec}, we deduce that $\depth (S/I^t) \le \depth (S/I^{n-a-4}) = 2$.

\medskip

\noindent\textbf{Case 2. }  If $a \ge 3$. Then we can write $a$ as $a=3\ell+r$ with $r\in\{0,1,2\}$.

If $t=2$ and $r=1$. Then by Lemma \ref{t=2}, we have
\[
\depth (S/I^t) \le \depth(S/(I^t : g))\le \lceil \frac{n -2}{3} \rceil.
\]

If $t=2$ and $r=2$, or $t=3$ and $r=2$. Then by Lemma \ref{lem_w_2_1}, we have
		\[
		\depth (S/I^t) \le \depth(S/(I^t : f_{a,t}))\le \lceil \frac{n -t}{3} \rceil.
		\]

If $t=2$ and $r=0$, or $t=3$ and $r\in\{0,1\}$, or $4\le t\le n-6$. Then we choose  $c = r$ and $d = t-2 -c$. Thus by Lemma \ref{lem_w_2_2}, we have

\[
\depth (S/I^t) \le \depth(S/(I^t : h_{c,d})) = 2 + (\ell-1) + \lceil \frac{n - (3 \ell+t+3)}{3} \rceil
\le \lceil \frac{n -t}{3} \rceil.
\]

If $t > n - 6$, then by Corollary \ref{cor_dec}, we deduce that $\depth (S/I^t) \le \depth (S/I^{n-6}) = 2$. This completes the proof.
\end{proof}

\begin{Remark} If  $\ww$ has two non-trivial weights and $I(P(\ww))$ is not integrally closed, then the limit depth of $I(P(\ww))$ may be $1$. For example, let $\ww = (2,1,1,2)$.  It can be verified that
\[
\depth (S / I(P(\ww))^t ) = 1 \quad \text{for all } t \ge 4.
\]
Thus, in general, computing the exact value of depth of powers of edge ideals of arbitrary edge-weighted paths is a non-trivial problem.
\end{Remark}

\medskip
\hspace{-6mm} {\bf Acknowledgments}

 \vspace{3mm}
\hspace{-6mm}  This research is supported by the Natural Science Foundation of Jiangsu Province
(No. BK20221353) and the National Natural Science Foundation of China (No.
12471246). The authors are grateful to the software systems \cite{Co} and \cite{GS}
 for providing us with a large number of examples.

\medskip
\hspace{-6mm} {\bf Data availability statement}

\vspace{3mm}
\hspace{-6mm}  The data used to support the findings of this study are included within the article.

\medskip
\hspace{-6mm} {\bf Conflict of interest}

\vspace{3mm}
\hspace{-6mm}  The authors declare that they have no competing interests.

	\end{document}